\def\Z{\mathbb{Z}}
\def\R{\mathbb{R}}
\def\C{\mathbb{C}}
\def\I{{\sf I}}
\def\var{{\rm var}}
\def\cov{{\rm cov}}
\def\E{{\sf E}}
\def\calC{{\cal C}}
\def\L2{{\cal L}_2}
\def\mod{\mbox {\rm mod}}
\def\Re{\mbox {\rm Re} \hspace*{.8truemm}}
\def\Im{\mbox {\rm Im} \hspace*{.8truemm}}
\def\O{{\rm O}}
\def\e{{\rm e}}
\def\i{{\rm i}}
\def\d{{\rm d}}
\def\u{\mathbf}
\def\gvm2{{\rm GvM}_2}
\def\gvmk{{\rm GvM}_k}
\begin{document}

\title*{
Information theoretic results for stationary time series
and the Gaussian-generalized von Mises time series}
\titlerunning{Information theoretic results for stationary time series and Gaussian-GvM time series}
\author{Riccardo Gatto \\ 
January 21, 2021}
\authorrunning{Gatto}
\institute{Riccardo Gatto \at
University of Bern,
Department of Mathematics an Statistics,
Institute of Mathematical Statistics and Actuarial Science,
Alpeneggstrasse 22, 3012 Bern, Switzerland
\\ Email: gatto@stat.unibe.ch 
\\
\\
2010 Mathematics Subject Classification \\
62M20 Inference from stochastic processes: prediction; filtering \\
62H11 Multivariate analysis: directional data; spatial statistics
}

%
\maketitle

\abstract*{This chapter presents some 
novel information theoretic results for the
analysis of stationary time series in the frequency domain.  
In particular, the spectral distribution that 
corresponds to the most uncertain or unpredictable time series 
with some values of the autocovariance function fixed, 
is the {\it generalized von Mises spectral distribution}. 
It is thus a maximum entropy spectral distribution and the 
corresponding stationary time series is called the {\it generalized 
von Mises time series}. The generalized von Mises 
distribution is used in directional statistics
for modelling planar directions that follow a multimodal distribution.
Furthermore, the {\it Gaussian-generalized von Mises times series} is presented
as the stationary time series that maximizes entropies in frequency and time domains,
respectively referred to as {\it spectral} and {\it temporal} {\it entropies}.
Parameter estimation and some 
computational aspects with this time series 
are briefly analyzed.}

\abstract{This chapter presents some
novel information theoretic results for the
analysis of stationary time series in the frequency domain.
In particular, the spectral distribution that
corresponds to the most uncertain or unpredictable time series
with some values of the autocovariance function fixed,
is the {\it generalized von Mises spectral distribution}.
It is thus a maximum entropy spectral distribution and the
corresponding stationary time series is called the {\it generalized
von Mises time series}. The generalized von Mises
distribution is used in directional statistics
for modelling planar directions that follow a multimodal distribution.
Furthermore, the {\it Gaussian-generalized von Mises times series} is presented
as the stationary time series that maximizes entropies in frequency and time domains,
respectively referred to as {\it spectral} and {\it temporal} {\it entropies}.
Parameter estimation and some
computational aspects with this time series
are briefly analyzed.}

\section{Introduction}

Nonstationary data typically have mean, variance, and covariances that 
change significantly over the time.
It is consequently difficult to make reliable predictions or forecasts directly from these
data. For this reason, nonstationary data are transformed to
stationary data, viz. data that possess constant mean, constant variance 
and constant covariance between
any two observations that are separated by any fixed time lag. 
Stationary data are often analyzed
in the frequency domain, where the spectral distribution plays the central role:
it characterizes the correlations between the values of the time series and it
allows for linear predictions. The analysis in the frequency domain is 
particularly interesting for the identification of periodicities of the data.
The first developments of the theory of stationary processes appeared at the end of the
19-th century with the analysis of data in the frequency domain, which is called the spectral analysis.
The alternative analysis in the time domain, viz. based on the covariance function, appeared only later.
The first statistical theory for periodic phenomena was developed by Fisher (1929).
Other early leading contributions to the theory of stationary processes are Cram\'er (1942),
Rice (1944 and 1945) as well as the volumes Cram\'er and Leadbetter (1967) and Yaglom (1962).
An up to date volume on stationary processes is Lindgren (2012) 
and an historical review can be found in Brillinger (1993).
This chapter provides various information theoretic results for spectral distributions 
of stationary processes with discrete time, i.e. stationary time series. 
It recasts the generalized von Mises (GvM) distribution, 
which was introduced in directional statistics as a model for planar directions,
in the context of the spectral analysis of time series.  
It shows that the spectral distribution that corresponds to the most 
uncertain or unpredictable time series and whose autocovariance function agrees with 
some few first predetermined values, for example estimated from a sample, 
is the GvM spectral distribution. It is 
thus a maximum entropy spectral distribution and the corresponding stationary 
time series can be called the generalized von Mises time series. 
The Gaussian stationary time series with GvM spectral distribution is called
Gaussian-GvM. This time series follows 
the maximal entropy principle w.r.t. time and frequency.  
Although some estimation and other computational aspects are briefly analyzed,
this chapter is only a first study of the GvM time series.  

Let $\{X_j\}_{j\in\Z}$ be a complex-valued time series whose
elements belong to a common Hilbert space $\L2$ of square integrable random variables, 
thus $\E\left[|X_j|^2\right]<\infty$, $\forall j\in\mathbb{Z}$.
Its autocovariance function (a.c.v.f.) is given by
\begin{align*}
\psi(j + r,j) & = \cov \left( X_{j+r},X_j \right) = \E \left[ X_{j+r} \overline{X_j}\right] 
- \E[ X_{j+r}]  \E \left[ \overline{X_j} \right], \;
\forall j,r\in\mathbb{Z}. 
\end{align*}
We assume that the time series is weakly stationary, 
which will be shortened to stationary, precisely that   
$\E[X_j]$ and $\psi ( j+ r, j )$ do not depend on $j$, $\forall j,r \in\mathbb{Z}$.
In this case we
denote $\mu = \E[X_j]$, 
\begin{align*}
\psi(r) & = \psi(r,0) = \psi ( j + r, j ), \; \forall j,r \in\mathbb{Z}, 
\end{align*}
and $\sigma^2 = \psi(0)$, for some $\sigma \in (0,\infty)$. A stronger type of
stationarity is the strict stationarity, which requires that 
the double finite dimensional distributions (f.d.d.) of the time series 
are invariant after a fixed time shift, i.e.
$\forall j_1<\ldots < j_n \in\mathbb{Z}$, $r\in\mathbb{Z}$ and $n \ge 1$,      
\begin{align}								\label{e42}
\left(U_{j_1},\ldots,U_{j_n}, V_{j_1},\ldots,V_{j_n}\right) \sim \left(
U_{j_1+r},\ldots,U_{j_n+r},V_{j_1+r},\ldots,V_{j_n+r}\right),
\end{align} 
where $U_j = \Re X_j$ and $V_j = \Im X_j$, $\forall j \in \Z$.
As usual, $E_1  
\sim E_2$ means that the random elements $E_1$ and $E_2$ follow the same distribution.
Stationary time series can be analyzed in the frequency domain, 
precisely through the spectral distribution.
A spectral distribution function (d.f.) is any nondecreasing 
function $F_\sigma: [-\pi , \pi] \to [0,\infty)$ that satisfies 
$F_\sigma(-\pi)=0$ and $F_\sigma(\pi) = \sigma^2$. Thus, the total mass 
of $F_\sigma$ is $\sigma^2$. This d.f. relates to the a.c.v.f. through the equation
\begin{align*}
\psi(r) & = \int_{(-\pi, \pi]} \e^{\i r v} \d F_\sigma(v), \; \forall r \in \mathbb{Z}.
\end{align*}
The simplest nontrivial stationary time series
$\{X_j\}_{j \in \Z}$ is called white noise if it has mean zero and a.c.v.f. 
$$
\psi(r) =
\begin{cases}
   \sigma^2, & \; \mathrm{if} \,\, r=0, \\
          0, & \; \mathrm{if} \,\, r=\pm 1, \pm 2, \ldots,
\end{cases}
$$
for some $\sigma >0$.
All frequencies of $\{X_j\}_{j \in \Z}$ are equally represented, because 
its spectral density is the uniform one with total mass $\sigma^2$, namely
$f_\sigma( \theta ) = \sigma^2/ (2 \pi)$, $\forall \theta \in ( - \pi , \pi ]$.
The term white noise originates from the fact that white color 
reflects all visible wave frequencies of light.
Real-valued time series are used in many applied sciences; refer e.g. to
Brockwell and Davis (1991) or Chatfield (2013).  
However, complex-valued 
time series are often preferred representations of bivariate signals, mainly 
because their compact formulation. They have been applied in various technical domains, 
such as magnetic resonance imaging (cf. e.g. Rowe, 2005) or oceanography 
(cf. e.g. Gonella, 1972).

Spectral distributions of complex-valued time series can be viewed as rescaled circular distributions.
For real-valued time series, the spectral distribution is a rescaled axially symmetric circular distribution.
We recall that a circular distribution is a probability distribution over the circle 
that is used for modelling planar directions as well as periodic phenomena. Two major references
are Mardia and Jupp (2000) and Jammalamadaka and SenGupta (2001).    
For a short introduction cf. e.g. Gatto and Jammalamadaka (2015).
During the last few decades, there has been a considerable amount of theoretical and 
applied research on circular distributions. 

Let $k \in \{1,2,\ldots\}$. A class of circular distributions that possess 
important theoretical properties has densities given by
\begin{eqnarray}                                                                        \label{e94}
\lefteqn{
f_1^{(k)}\left(\theta \mid \mu_1,\ldots,\mu_k,\kappa_1,\ldots,\kappa_k\right)=} \nonumber \\ & &
\frac{1}{2 \pi G_0^{(k)}\left(\delta_1,\ldots,\delta_{k-1},\kappa_1,\ldots,\kappa_k\right)}
\exp\left\{ \sum_{j=1}^{k} \kappa_j \cos j (\theta - \mu_j) \right\},
\end{eqnarray}
$\forall \theta \in (-\pi , \pi]$ (or any other interval of length $2 \pi$),
where $\mu_j \in (-\pi /j, \pi /j]$,
$\kappa_j \ge 0$, for $j=1,\ldots,k$,
\begin{eqnarray*} 
\lefteqn{
G_0^{(k)}(\delta_1,\ldots,\delta_{k-1},\kappa_1,\ldots,\kappa_k) =} \nonumber \\ & &
\frac{1}{2 \pi} \int_{0}^{2 \pi} \exp \{\kappa_1 \cos \theta
+\kappa_2 \cos 2 (\theta + \delta_1) + \ldots +
\kappa_k \cos k (\theta + \delta_{k-1})\} \d \theta,
\end{eqnarray*}
and where
$\delta_{j}=(\mu_{1}-\mu_{j+1}) \mod( 2 \pi/(j+1) )$, for $j = 1,\ldots,k-1$,
whenever $k \ge 2$. 
The circular density (\ref{e94}) for $k\ge 2$ was thoroughly analyzed by
Gatto and Jammalamadaka (2007) and Gatto (2009), 
who called it ``generalized von Mises density of order $k$''
($\gvmk$). Let us denote a circular random variable $\theta$ with that density as
$\theta \sim \gvmk(\mu_1,$ $\ldots,$ $\mu_k,\kappa_1,$ $\ldots,$ $\kappa_k)$.
The ${\rm GvM}_1$ density is the well-known circular normal
or von Mises (vM) density, which represents within 
circular statistics what the normal 
distribution represents in linear statistics. It is given by
$
f_1^{(1)}(\theta \mid \mu_1,\kappa_1) =
\{2 \pi I_0(\kappa_1)\}^{-1}
\exp \{\kappa_1 \cos(\theta -\mu_1)\},
$
$\forall \theta \in (-\pi, \pi]$, where $\mu_1 \in (-\pi,\pi]$, $\kappa_1 \ge0$ and
where $I_{n}(z) = (2 \pi)^{-1} \int_{0}^{2 \pi} \cos n \theta $ $ \exp\{
z \cos \theta\} \d \theta $, $\forall z \in \C$, is the modified Bessel
function of the first kind and integer order $n$ (see e.g. Abramowitz and Stegun, 1972, p. 376).
Compared to the vM, which is axially symmetric and unimodal whenever $\kappa_1 > 0$,
the ${\rm GvM}_2$ distribution
allows for substantially higher adjustability, 
in particular in terms of asymmetry and bimodality.
This makes it a practical circular distributions that has found various applications.
Some recent ones are: 
Zhang et al. (2018), in meteorology, Lin and Dong (2019), in oceanography, Astfalck et al. (2018), 
in offshore engineering, and in Christmas (2014), in signal processing.
The $\gvmk$ spectral density is 
given by $f_\sigma^{(k)} = \sigma^2 f_1^{(k)}$, for some $\sigma \in (0,\infty)$: it
is the $\gvmk$ circular density $f_1^{(k)}$ given in (\ref{e94}) that is rescaled 
to have any desired total mass $\sigma^2$. 
When the $\gvmk$ spectral density is axially symmetric around the null axis, then
the corresponding time series $\{ X_j \}_{j \in \Z}$ is real-valued. 
As shown in Salvador and Gatto (2020a), the $\gvm2$ density
with $\kappa_1,\kappa_2 >0$ 
is axially symmetric iff $\delta_1 = 0$ or $\delta_1 = \pi / 2$. 
In both cases, the axis of symmetry has
angle $\mu_1$ with respect to (w.r.t.) the null direction. 
The $\gvm2$ spectral density has a practical role time series because of its uni- and 
bimodal shape. A complete analysis of the modality of the $\gvm2$ distribution
is given in Salvador and Gatto (2020b). 
Note that in some situations a three-parameter version of the 
$\gvm2$ distribution introduced by
Kim and SenGupta (2013) appears sufficient to model both asymmetric and bimodal data.
The densities of this subclass are obtained by setting $\delta_1 = \pi/4$
and $k=2$ in the $\gvmk$ density (\ref{e94}). However, this subclass does 
not possess the optimality properties of the $\gvm2$ distribution 
that are presented in Section \ref{s2}.  
It is worth mentioning that the GvM spectral distribution has many similarities with 
the exponential model of Bloomfield (1973), which is a truncated Fourier series of the
logarithm of some spectral distribution. Bloomfield motivates the low truncation of the Fourier series  
by the fact that ``the logarithm of an estimated spectral density function is often found to be a fairly
well-behaved function''. A closely related reference is Healy and Tukey (1963).
We should however note that Bloomfield's model if given for real-valued time series only.  

The estimation of the spectral distribution is one of the important problems in
the analysis of stationary time series and of stationary stochastic processes
with continuous time. 
Information theoretic quantities like Kullback-Leibler's information 
(cf. Kullback and Leibler, 1951) or Shannon's entropy (cf. Shannon, 1948) are very
useful in this context. These quantities are usually defined for probability
distributions but they can be considered for distributions with finite mass.
These are spectral distributions and we assume them absolutely continuous (w.r.t. the Lebesgue measure). 
Thus, let $f_\sigma$ and $g_\sigma$ be two spectral densities whose integrals over 
$(-\pi,\pi]$ are both equal to $\sigma^2$. 
The spectral Kullback-Leibler information of $f_\sigma$ w.r.t. $g_\sigma$ is given by
\begin{equation}                                                                        \label{e80}
I(f_\sigma|g_\sigma) = \int_{-\pi}^{\pi} \log \frac{f_\sigma(\theta)}{g_\sigma(\theta)} f_\sigma(\theta) \d \theta
		     = \sigma^2 I(f_1|g_1),
\end{equation}
where $0 \log 0 = 0$ is assumed and where
the support of $f_\sigma$ is included in the support of $g_\sigma$,
otherwise $I(f_\sigma|g_\sigma) = \infty$.
It follows from Gibbs inequality that $I(f_\sigma|g_\sigma)$ is nonnegative, precisely 
$$I(f_\sigma|g_\sigma) \ge 0,$$
for all possible spectral densities $f_\sigma$ and $g_\sigma$, with
equality iff $f_1=g_1$ a.e. The Kullback-Leibler information is also called
relative entropy, Kullback-Leibler divergence or distance, eventhough
it is not a metric. Thus (\ref{e80}) is a measure of divergence for 
distributions with same total mass $\sigma^2$.
Shannon's entropy can be defined for the spectral density $f_\sigma$ by
\begin{equation}                                                                        \label{e92}
S(f_\sigma) = - \int_{-\pi}^{\pi} \log \frac{f_\sigma(\theta)}{(2 \pi)^{-1} \sigma^2} f_\sigma (\theta) \d \theta
	    = - I ( f_\sigma | u_\sigma)  = - \sigma^2 I (f_1| u_1 ),
\end{equation}
where $u_\sigma$ is the uniform density with total mass $\sigma^2$ over $(-\pi,\pi]$, viz.
$u_\sigma = \sigma^2 /(2 \pi) \I_{(-\pi,\pi]}$, $\I_A$ denoting the indicator of set $A$. 
Shannon's entropy of the circular density $f_1$ over $(-\pi,\pi]$ is originally defined as 
$- \int_{-\pi}^{\pi} \log f_1(\theta) f_1 (\theta) \d \theta
= -(2 \pi)^{-1} - I ( f_1 | u_1 )$. It
measures the uncertainty inherent in the probability distribution with density $f_1$.
Equivalently, $S(f_1)$ measures the expected amount of information
gained on obtaining an observation from $f_1$, based on the principle
that the rarer an event, the more informative its occurrence. The spectral entropy 
defined in (\ref{e92})
differs slightly differs the original formula of
Shannon's entropy for probability distributions:
inside the logarithm, $f_\sigma$ is divided by the uniform density with total mass $\sigma^2$.
With this modification the spectral entropy becomes scale invariant w.r.t. $\sigma^2$,
just like the spectral Kullback-Leibler information (\ref{e80}).
The spectral entropy satisfies
$S(f_\sigma) \le 0$, with equality iff $f_\sigma = u_\sigma$ a.e.
This follows from Gibbs inequality.

The topics of the next sections of this chapter are the following. 
Section \ref{s2} provides information theoretic
results for spectral distributions and introduces the related GvM and the Gaussian-GvM time series.
Section \ref{s21} gives general definitions and concepts.
Section \ref{s22} provides the optimal spectral distributions under constraints on the a.c.v.f.
The GvM spectral distribution appears as the one that maximizes Shannon's entropy under
constraints on the first few values of the a.c.v.f. 
Section \ref{s23} motivates the
Gaussian-GvM time series from the fact that it 
follows the maximal entropy principle in both time and frequency domains.
Section \ref{s3} provides some computational aspects. 
Section \ref{s31} gives some series expansions for integral functions appearing in the 
context of the $\gvm2$ time series, which is one the most relevant case. 
The estimation problem of the GvM spectral distribution is presented in Section \ref{s32}.
After presenting some general consideration and some related results on maximum entropy
spectral estimation, an estimator to the parameters of the GvM spectral distribution is provided.
Section \ref{s33} provides an expansion for the GvM spectral d.f.  
Some short concluding remarks are given in Section \ref{s4}. 

\section{The GvM and the Gaussian-GvM time series}					\label{s2}

Section \ref{s21} gives some general facts on time series and defines 
the GvM time series. 
Section \ref{s22} provides information theoretic results for spectral distributions.
An important result is that the GvM spectral distribution maximizes the entropy
under constraints on the a.c.v.f. Section \ref{s23} proposes the
Gaussian-GvM time series based on the fact that it             
follows the maximal entropy principle in both time and frequency domains, under the
same constraints. 

\subsection{General considerations}						\label{s21}  

Two central theorems of spectral analysis of time series are the following.
The first one is Herglotz theorem: 
\begin{itemize}
\item[]  
{\it 
$\psi:\mathbb{Z} \rightarrow \mathbb{C}$ is nonnegative definite (n.n.d.)}\footnote{
The function $\psi : \mathbb{R} \rightarrow \mathbb{C} $ is
n.n.d.
if $\sum_{i=1}^n \sum_{j=1}^n $ $c_i \overline{c}_j f( x_i - x_j ) \geq 0$,
$\forall x_1, \ldots , x_n \in \mathbb{R}$, $c_1, \ldots , c_n \in \mathbb{C}$
and $ n \ge 1$.
\\
Any n.n.d. function $f$ is Hermitian, i.e. $f(-x) = \overline{f(x)}$, $\forall x \in 
\R$.}{\it
$\Leftrightarrow$
$\psi(r) = \int_{(-\pi, \pi]} \e^{\i r v} \d F_\sigma(v)$, $\forall r \in\mathbb{Z}$,
for some 
d.f. $F_\sigma$ over $[-\pi , \pi]$, with $F_\sigma(-\pi)=0$ and $\sigma^2=
F_\sigma(\pi) \in (0,\infty)$.
}
\end{itemize}
The second theorem is a characterization of the a.c.v.f.: 
\begin{itemize}
\item[] 
{\it 
$\psi:\mathbb{Z}\rightarrow \mathbb{R}$ is the a.c.v.f. of a (strictly)
stationary complex-valued time series $\Leftrightarrow$ $\psi$ is n.n.d.
}
\end{itemize}

These two theorems tell that if we consider the spectral d.f. $F^{(k)}_\sigma = \sigma^2 F_1^{(k)}$,
where $F_1^{(k)}$ is 
the $\gvmk$ d.f. with density $f_1^{(k)}$ given by (\ref{e94}), then there exists a stationary time
series $\{ X_j \}_{j \in \Z}$ with spectral d.f. $F_\sigma^{(k)}$ and density $f_\sigma^{(k)} = \sigma f_1^{(k)}$ 
that we call GvM or, more precisely, $\gvmk$ time series.
Thus the $\gvmk$ time series is stationary by definition, it has variance 
$F_\sigma^{(k)}(\pi) = \sigma^2$ and it is generally complex-valued, unless
the $\gvmk$ spectral distribution is axially symmetric around the origin. 

The complex-valued $\gvmk$ stationary time series 
$\{ X_j \}_{j \in \Z}$ can be chosen with mean zero, variance $\sigma^2$
and Gaussian, meaning that the double f.d.d. given in (\ref{e42}) are Gaussian. 
In this case, the distribution of $\{ X_j \}_{j \in \Z}$ is
however not entirely determined by its a.c.v.f. $\psi^{(k)}$ or, 
alternatively, by its spectral d.f. $F_\sigma^{(k)}$.
(The formula for the a.c.v.f. is given later in Corollary \ref{c22}.4.)
In order to entirely determine this distribution, one also needs  
the so-called pseudo-covariance $\E[ X_{j+r} X_j]$, $\forall j,r \in \Z$.
So an arbitrary 
Gaussian, with mean zero and (weakly) stationary time series $\{ X_j \}_{j \in \Z}$
is not necessarily strictly stationary:
$\{ X_j \}_{j \in \Z}$ is strictly stationary iff the covariance 
$\E\left[ X_{j+r} \overline{X_j}\right]$ and the pseudo-covariance $\E[ X_{j+r} X_j]$ 
do not depend on $j \in \Z$, $\forall r \in \Z$.
This is indeed equivalent to the independence on $j \in \Z$ of
\begin{align}								\label{e129} 
  \psi_{UU}(r) & = \E[ U_{j+r} U_j ], \; 
  \psi_{VV}(r) = \E[ V_{j+r} V_j ], \nonumber \\ 
  \psi_{UV}(r) & = \E[ U_{j+r} V_j ] \; \text{ and } 
  \psi_{VU}(r) = \E[ V_{j+r} U_j ],
  \; \forall r \in \Z,
\end{align}
where $U_j = \Re X_j$ and $V_j = \Im X_j$, $\forall j \in \Z$.
Under this independence on $j \in \Z$, we have
$\psi_{VU}(r) = \psi_{UV}(-r)$, $\forall r \in \Z$. 
However, according Herglotz theorem, if the a.c.v.f. $\psi^{(k)}$ 
is obtained by Fourier inversion 
of the $\gvmk$ spectral density, then it is n.n.d. By the above characterization   
of the a.c.v.f., a strictly stationary $\gvmk$ time series always exists. 
The existence of a particular (precisely radially symmetric) 
strictly stationary Gaussian-$\gvmk$ time series 
that satisfies some constraints on the a.c.v.f. is shown in Section \ref{s23}. 

Next, for any given Gaussian-$\gvmk$ time series with spectral d.f. $F_\sigma^{(k)}$, 
there exists a spectral process $\{ Z_\theta \}_{\theta \in [-\pi,\pi]}$ that is complex-valued and 
Gaussian.
We remind that the process of the frequencies $\{ Z_\theta \}_{\theta\in [-\pi,\pi]}$ is 
defined through the mean square stochastic integral
\begin{align}								\label{e145}  
X_j & = \int_{(-\pi,\pi]} \e^{ \i \theta j } \d Z_\theta, \; \text{ a.s.,}
\; \forall j \in \Z,
\end{align}
and by the following conditions:
$\E [ Z_\theta ] = 0$, $\forall \theta \in [-\pi,\pi]$,
$\E \left[ \left(Z_{\theta_2} - Z_{\theta_1} \right) \overline{ \left( Z_{\theta_4} - Z_{\theta_3} \right)}
\right]$ $= 0$, $\forall - \pi \le \theta_1 < \theta_2 < \theta_3 < \theta_4 \le \pi$,
viz. it has orthogonal increments, and
\begin{align}											\label{e184}
\E \left[ \left| Z_{\theta_2} - Z_{\theta_1} \right|^2 \right] & =  
F_\sigma^{(k)}(\theta_2) - F_\sigma^{(k)}(\theta_1), \; \forall - \pi \le \theta_1 < \theta_2 \le \pi.
\end{align}

There are several reasons for considering the Gaussian-GvM time series. A practical one is
that their simulation can be done with the algorithms presented in 
Chapter XI of Asmussen and Glynn (2007). One of these algorithms the decomposition 
(\ref{e145}). A theoretical reason for considering normality is that 
it leads to a second maximal entropy principle,
this one no longer in the frequency domain but in the time domain. 
We pursue this explanation on the temporal entropy in Section \ref{s23}.   

\subsection{Spectral Kullback-Leibler information and entropy}                                         \label{s22}

Let $g_\sigma$ be the spectral density of some stationary time series with 
variance $\sigma^2$, for some $\sigma \in (0,\infty)$. 
For a chosen $k \in \{ 1,2,\ldots\}$, consider the $r$-th a.c.v.f. condition or constraint
\begin{align}								\label{e199} 
\calC_r: & \int_{-\pi}^{\pi} \e^{\i r \theta} g_\sigma(\theta) \d \theta = \psi_r, 
\end{align}
for some $\psi_r \in \C$ satisfying $|\psi_r| \le \sigma^2$,
for $r=1,\ldots,k$, and such that the $(k+1) \times (k+1)$ matrix
\begin{align}									\label{e206}  
\left(
\begin{matrix}
\sigma^2          & \psi_1                & \ldots  & \psi_k     \\
\overline{\psi_1} & \sigma^2              & \ldots  & \psi_{k-1} \\
\vdots            & \vdots                & \ddots & \vdots     \\
\overline{\psi_k} & \overline{\psi_{k-1}} & \ldots  & \sigma^2
\end{matrix}
\right)
\end{align}
is n.n.d., for $k=1,2,\ldots$. 
One can re-express these conditions as 
\begin{align}                                                           \label{e201}
\calC_r: &  
\int_{-\pi}^{\pi}  \cos r \theta \, g_\sigma(\theta) \d \theta  = \nu_r \; \text{ and } \; 
\int_{-\pi}^{\pi} \sin r \theta \, g_\sigma(\theta) \d \theta = \xi_r, 
\end{align} 
where $\nu_r = \Re \psi_r$ and $\xi_r = \Im \psi_r$, giving thus $\nu_r^2 + \xi_r^2 \le \sigma^2$,
for $r=1,2,\ldots$, and with n.n.d. matrix (\ref{e206}), for a chosen $k \in \{ 1, 2, \ldots \}$.
One can encounter the two following practical problems.

In an applied field where a specific spectral density $h_\sigma$ is traditionally used
(refer to comments in Section \ref{s4}), one may search for the
the spectral density $g_\sigma$ that satisfies $\calC_r$, given in
(\ref{e199}), for $r=1,\ldots,k$, and that
is the closest to the traditional density $h_\sigma$. 

Alternatively, the spectral density $g_\sigma$ is unknown but the
values of $\psi_1, \ldots , \psi_k$ are available, either
because they constitute a priori knowledge about the time series
or because they are obtained from a sample of the stationary time series.
In this second case, the values of $\psi_1,\ldots,\psi_k$ can be obtained
by taking them equal to the corresponding values of the
empirical or sample a.c.v.f.
For the sample $X_1,\ldots,X_n$ of the time series, the sample a.c.v.f.
is given by the Hermitian function
\begin{align}									\label{e187} 
\hat{\psi}_n (r)
= \frac{1}{n} \sum_{j=1}^{n-r} (X_{j+r} - M_n ) \overline{(X_j - M_n)}
\;\;
\mathrm{and}
\;\;
\hat{\psi}_n (-r) = \overline{\hat{\psi}_n(r)}, \;\;
\text{ for } \, r = 0,\ldots , n-1,
\end{align}
where $M_n = n^{-1 }\sum_{j=1}^n X_j$.
Thus we set $\psi_r = \hat{\psi}_n(r)$, for $r=1,\ldots,k$ and for $k \le n-1$.
Note that the matrix (\ref{e206}) is n.n.d. in this case. 

Theorem \ref{c100} below addresses the first of these two problems and 
it is the central part of this article. 
The second problem is addressed by Corollary \ref{c22}.
The following definitions are required. 
For $k = 1,2,\ldots$ and for an arbitrary circular density $g_1$, define the following integral 
functions:
$$G_r^{(k)}
\left(\delta_1,\ldots,\delta_{k-1},\kappa_{1},\ldots,\kappa_{k};g_1\right) =
\;\;\;\;\;\;\;\;\;\;\;\;\;\;\;\;\;\;\;\;\;\;\;\;\;\;\;\;\;\;\;\;\;
\;\;\;\;\;\;\;\;\;\;\;\;\;\;\;\;\;\;\;\;\;\;\;\;\;\;\;\;\;\;\;\;\;
\;\;\;\;\;\;\;\;\;\;\;\;
$$
$$
\int_0^{2 \pi} \cos r \theta \,
\exp\left\{\kappa_{1} \cos \theta
+ \kappa_{2} \cos 2 (\theta + \delta_1)
+ \ldots + \kappa_{k} \cos k (\theta + \delta_{k-1})
\right\} g_1 (\theta) 
\d \theta,
$$
$$H_r^{(k)}
(\delta_1,\ldots,\delta_{k-1},\kappa_{1},\ldots,\kappa_{k};g_1) =
\;\;\;\;\;\;\;\;\;\;\;\;\;\;\;\;\;\;\;\;\;\;\;\;\;\;\;\;\;\;\;\;\;
\;\;\;\;\;\;\;\;\;\;\;\;\;\;\;\;\;\;\;\;\;\;\;\;\;\;\;\;\;\;\;\;\;
\;\;\;\;\;\;\;\;\;\;\;\;
$$
$$
\int_0^{2 \pi} \sin r \theta \,
\exp\{\kappa_{1} \cos \theta
+ \kappa_{2} \cos 2 (\theta + \delta_1)
+ \ldots + \kappa_{k} \cos k (\theta + \delta_{k-1})
\} g_1 (\theta)
\d \theta,
$$
\begin{align*}                                                                        \label{e160}
A_r^{(k)}
\left(\delta_1,\ldots,\delta_{k-1},\kappa_{1},\ldots,\kappa_{k};g_1\right) & =
\frac{G_r^{(k)}
\left(\delta_1,\ldots,\delta_{k-1},\kappa_{1},\ldots,\kappa_{k};g_1\right)}
{G_0^{(k)}\left(\delta_1,\ldots,\delta_{k-1},\kappa_{1},\ldots,\kappa_{k};g_1\right)}
\end{align*}
and
\begin{align*} 
B_r^{(k)}
\left(\delta_1,\ldots,\delta_{k-1},\kappa_{1},\ldots,\kappa_{k};g_1\right) & =
\frac{H_r^{(k)}
\left(\delta_1,\ldots,\delta_{k-1},\kappa_{1},\ldots,\kappa_{k};g_1\right)}
{G_0^{(k)}
\left(\delta_1,\ldots,\delta_{k-1},\kappa_{1},\ldots,\kappa_{k};g_1\right)},
\end{align*}
for $r=1,\ldots,k$, where  
$\delta_{j}=(\mu_{1}-\mu_{j+1}) \mod( 2 \pi/(j+1) )$, for $j = 1,\ldots,k-1$
and
$\kappa_{1},\ldots,\kappa_{k}$ $\ge0$.
For these constants we make the conventions that the arguments
$\delta_1,\ldots,\delta_{k-1}$ vanish when $k=1$ and 
that the argument $g_1$ is omitted when equal to the circular uniform density $u_1$.
For example, $G_0^{(1)}(\kappa_1) = (2 \pi)^{-1} \int_0^{2 \pi} \e^{\kappa_1 \cos \theta} 
\d \theta = I_0(\kappa_1)$.
Define the matrix of counter-clockwise rotation of angle $\alpha$ as
\begin{equation}  		                                                      \label{e109}
\u{R}(\alpha) =
   \left( \begin{array}{cc} \cos \alpha  & - \sin \alpha \\
                            \sin \alpha  &   \cos \alpha
   \end{array} \right).
\end{equation}
\begin{theorem}[Kullback-Leibler closest spectral distribution]		  	\label{c100} 
\noindent 
Let $\sigma \in (0,\infty)$ and let $g_\sigma$ and $h_\sigma$ be two spectral densities
with total mass $\sigma^2$.
\begin{enumerate} 
\item The spectral density $g_\sigma$ that satisfies $\calC_r$, given in
(\ref{e199}), for $r=1,\ldots,k$, and that
is the closest to another spectral density $h_\sigma$, in the sense of minimizing
the Kullback-Leibler information $I(g_\sigma|h_\sigma)$,
is the exponential tilt of $h_\sigma$ that takes the form 
\begin{eqnarray}                                                                        \label{e100}
\lefteqn{
g_\sigma(\theta) =} \nonumber \\ & &
\frac{1}{G^{(k)}_0(\delta_{1},\ldots,\delta_{k-1},\kappa_1,\ldots,\kappa_k;h_1)}
\exp\left\{ \sum_{j=1}^k
\kappa_j \cos j (\theta-\mu_j)
\right\} h_\sigma(\theta), 
\end{eqnarray}
$\forall \theta \in (-\pi,\pi]$, where
$\delta_{j}=(\mu_{1}-\mu_{j+1}) \mod( 2 \pi/(j+1))$, for $j = 1,\ldots,k-1$, 
$\mu_j \in (-\pi /j, \pi /j]$ and $\kappa_j \ge 0$, for $j=1,\ldots,k$. 
The values of these parameters are the solutions of
\begin{equation}                       		                             \label{e108}
   \left( \begin{array}{c}\nu_r \\ \xi_r
   \end{array} \right) = \sigma^2 \u{R}(r\mu_1)
   \left( \begin{array}{c} A_r^{(k)}(\delta_1,\ldots,\delta_{k-1},\kappa_1,\ldots,\kappa_{k};h_1) \\
                           B_r^{(k)}(\delta_1,\ldots,\delta_{k-1},\kappa_1,\ldots,\kappa_{k};h_1)
   \end{array} \right), 
\end{equation}
where $\u{R}(r \mu_1)$ denotes the rotation matrix (\ref{e109}) at 
$\alpha = r \mu_1$ and where $\nu_r$ and $\xi_r$ are given by (\ref{e201}), 
for $r=1,\ldots,k$.
\item For any spectral density $g_\sigma$ that satisfies $\calC_r$, for $r=1,\ldots,k$,
the minimal Kullback-Leibler information of $g_\sigma$ w.r.t. $h_\sigma$ 
is given by 
\begin{align}                                                                \label{e105}
- \sigma^2 \log G^{(k)}_0(\delta_1,\ldots,\delta_{k-1},\kappa_1,\ldots,\kappa_k;h_1) +
\sum_{r=1}^{k} \kappa_r (\nu_r & \cos r \mu_r + \xi_r \sin r \mu_r ) \nonumber \\ 
	       			& \le I(g_\sigma|h_\sigma),
\end{align}
with equality iff $g_\sigma$ is a.e. given by (\ref{e100}), where
the values of the parameters $\mu_j \in (-\pi /j, \pi /j]$ and $\kappa_j \ge 0$, for $j=1,\ldots,k$,
are solutions of (\ref{e108}).
\end{enumerate} 
\end{theorem}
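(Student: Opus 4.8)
The plan is to avoid any direct variational minimization and instead exploit the nonnegativity of the Kullback-Leibler information (Gibbs inequality) through an orthogonality-type decomposition. First I would posit the candidate density $g_\sigma^\ast$ of the claimed form (\ref{e100}) and pin down its free parameters $\mu_j,\kappa_j$ by imposing the constraints $\calC_r$. The substitution $\phi = \theta - \mu_1$ turns the exponent $\sum_j \kappa_j \cos j(\theta-\mu_j)$ into the canonical form appearing inside $G_0^{(k)}, G_r^{(k)}, H_r^{(k)}$, since $j(\mu_1-\mu_j)$ and $j\delta_{j-1}$ agree modulo $2\pi$. Writing $\cos r\theta$ and $\sin r\theta$ through the angle-addition formula for $\cos r(\phi+\mu_1)$ and $\sin r(\phi+\mu_1)$, and reading off the resulting integrals as $\sigma^2 A_r^{(k)}$ and $\sigma^2 B_r^{(k)}$, the two scalar constraints $\int \cos r\theta\, g_\sigma^\ast = \nu_r$ and $\int \sin r\theta\, g_\sigma^\ast = \xi_r$ assemble exactly into the rotated system (\ref{e108}) with $\u{R}(r\mu_1)$. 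Thus (\ref{e108}) is precisely the feasibility condition for $g_\sigma^\ast$, and the n.n.d.\ condition on (\ref{e206}) is what places the moment vector in the interior of the realizable range, so that a solution exists.

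The core of the argument is the decomposition, valid for \emph{any} admissible $g_\sigma$ satisfying $\calC_1,\ldots,\calC_k$,
\begin{equation*}
I(g_\sigma \mid h_\sigma) = I(g_\sigma \mid g_\sigma^\ast) + \int_{-\pi}^{\pi} g_\sigma(\theta)\, \log\frac{g_\sigma^\ast(\theta)}{h_\sigma(\theta)} \, \d\theta.
\end{equation*}
The key observation is that $\log(g_\sigma^\ast/h_\sigma) = -\log G_0^{(k)}(\delta_1,\ldots,\delta_{k-1},\kappa_1,\ldots,\kappa_k;h_1) + \sum_{j=1}^k \kappa_j \cos j(\theta-\mu_j)$ is a fixed trigonometric function, so the second integral depends on $g_\sigma$ only through its total mass $\sigma^2$ and the moments frozen by $\calC_r$. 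Explicitly it equals $-\sigma^2 \log G_0^{(k)}(\delta_1,\ldots,\delta_{k-1},\kappa_1,\ldots,\kappa_k;h_1) + \sum_{r=1}^k \kappa_r(\nu_r \cos r\mu_r + \xi_r \sin r\mu_r)$, which is exactly the left-hand side of (\ref{e105}). In particular it takes the same value for every admissible $g_\sigma$, and choosing $g_\sigma = g_\sigma^\ast$ (itself admissible by the previous step, where $I(g_\sigma^\ast\mid g_\sigma^\ast)=0$) identifies this common value as $I(g_\sigma^\ast \mid h_\sigma)$.

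Both conclusions then follow at once from Gibbs inequality applied to the leading term: since $g_\sigma$ and $g_\sigma^\ast$ share total mass $\sigma^2$, we have $I(g_\sigma \mid g_\sigma^\ast) \ge 0$ with equality iff $g_\sigma = g_\sigma^\ast$ a.e. Hence $I(g_\sigma \mid h_\sigma) \ge I(g_\sigma^\ast \mid h_\sigma)$, which is simultaneously the lower bound (\ref{e105}) of Part~2 with its stated equality condition and the statement that $g_\sigma^\ast$ is the unique minimizer of Part~1.

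I anticipate two places needing care. The first is the change-of-variables bookkeeping producing $\u{R}(r\mu_1)$: under $\phi=\theta-\mu_1$ the reference density $h_1$ is shifted to $h_1(\cdot+\mu_1)$, so the functions $A_r^{(k)}, B_r^{(k)}$ in (\ref{e108}) must be read with this shifted argument. This subtlety is vacuous when $h_1=u_1$ is uniform, which is precisely the case underlying the GvM maximum-entropy result. The second, more substantive point is the solvability of (\ref{e108}) for admissible moments; this rests on the strict convexity of $\log G_0^{(k)}$ in the canonical parameters, which makes the moment map a diffeomorphism onto the interior of the region cut out by the n.n.d.\ constraint (\ref{e206}). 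I expect this existence-and-uniqueness-of-parameters step to be the main obstacle, whereas the optimality and uniqueness of $g_\sigma^\ast$ itself require no convexity once a feasible $g_\sigma^\ast$ has been exhibited, the Gibbs decomposition doing all the work.
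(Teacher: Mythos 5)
Your proof is correct, but it follows a genuinely different route from the paper's. The paper offers no self-contained variational argument: it obtains Theorem \ref{c100} by reduction, declaring it ``a rather direct consequence or generalization of Theorem 2.1 of Gatto (2007)'' for circular distributions, with the circular density rescaled to a spectral density of total mass $\sigma^2$ and the trigonometric moments (\ref{e170}) replaced by a.c.v.f. values via $\psi(r)=\sigma^2\varphi_r$ as in (\ref{e182}); its only substantive added remark is that (\ref{e108}) can be rewritten as (\ref{e119}), i.e. that the parameter equations are exactly the constraints $\calC_r$. Your decomposition
\begin{align*}
I(g_\sigma|h_\sigma) = I(g_\sigma|g_\sigma^\ast) + \int_{-\pi}^{\pi} \log\frac{g_\sigma^\ast(\theta)}{h_\sigma(\theta)}\, g_\sigma(\theta)\,\d\theta ,
\end{align*}
with the second term constant over the constraint set and equal to the left-hand side of (\ref{e105}), is the classical Kullback-type argument that underlies the cited circular result; you supply in full what the paper imports from the literature, and in your version parts 1 and 2, including the a.e. uniqueness, fall out of a single application of Gibbs' inequality. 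The paper's reduction buys brevity and consistency of the constants $G_0^{(k)},A_r^{(k)},B_r^{(k)}$ with the earlier GvM literature; yours buys a self-contained proof in which the role played by each constraint is explicit.

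Two of your side remarks deserve emphasis. The shift subtlety you flag is real: after $\phi=\theta-\mu_1$, the reference density entering $G_0^{(k)}$, $A_r^{(k)}$, $B_r^{(k)}$ is $h_1(\cdot+\mu_1)$ rather than $h_1$, so (\ref{e100}) and (\ref{e108}) as printed are exact only when $h_1$ is invariant under that shift --- in particular for $h_1=u_1$, the only case needed for Corollary \ref{c22}; the paper never notes this. Second, the solvability of (\ref{e108}) for admissible $(\nu_r,\xi_r)$ is indeed the one genuine gap, in your write-up and in the paper alike: the paper merely requires (\ref{e206}) to be n.n.d. and takes the existence of the parameter values for granted, while your appeal to strict convexity of $\log G_0^{(k)}$ in the canonical parameters is the right idea (standard exponential-family theory) but is asserted rather than carried out. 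Since the theorem's optimality and uniqueness claims are conditional on such parameters existing, this gap does not undermine your proof of those claims.
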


Theorem \ref{c100} is a rather direct
consequence or generalization of Theorem 2.1 of Gatto (2007), in
which the trigonometric moments are replaced by the a.c.v.f.
and the circular distribution is replaced by the spectral distribution.
Indeed, along with the generalization of the circular distribution
to the spectral distribution, the a.c.v.f. of a stationary time series
generalizes the trigonometric moment. Precisely, the $r$-th trigonometric 
moment of the circular random variable $\theta$ with density $g_1$ is given by
\begin{align}										\label{e170} 
\varphi_r & = \gamma_r + \i \sigma_r = \E \left[ \e^{\i r \theta} \right] = \int_{-\pi}^\pi \e^{\i r \theta}
g_1(\theta) \d \theta,
\end{align}
for some $\gamma_r, \sigma_r \in \R$ and
$\forall r \in \Z$, whereas the a.c.v.f. of the stationary time series with the spectral density
$g_\sigma = \sigma^2 g_1$ is given by
\begin{align}                                                                           \label{e182}
\psi(r) & = \sigma^2 \varphi_r = \sigma^2 ( \gamma_r + \i \sigma_r ), \; \forall r \in \Z.
\end{align}
Clearly, $\psi (0) = \sigma^2$ and $|\psi (r)| \le \psi (0)$, $\forall r \in \Z$.
The claim that (\ref{e182}) is indeed the a.c.v.f. of a stationary time series
is rigorously justified by the above mentioned Herglotz theorem and  
characterization of the a.c.v.f. 

In the context of the justification of Theorem \ref{c100}.1,
we can note that an equivalent expression for (\ref{e108}) is given by 
\begin{align}										\label{e119} 
\psi_r & = \sigma^2 \e^{\i r \mu_1}
	\left\{A_r^{(k)}(\delta_1,\ldots,\delta_{k-1},\kappa_1,\ldots,\kappa_{k};h_1) + \i
	  B_r^{(k)}(\delta_1,\ldots,\delta_{k-1},\kappa_1,\ldots,\kappa_{k};h_1) \right\},
\end{align}
which can be seen equivalent to $\calC_r$, for $r=1,\ldots,k$.

A major consequence of Theorem \ref{c100} is 
that the $\gvmk$ spectral distribution is a maximum entropy distribution.
This fact and related results are given in Corollary \ref{c22}. 
\begin{corollary}[Maximal Shannon's spectral entropy distribution]		\label{c22}
\noindent 
Let $\sigma \in (0,\infty)$ and $g_\sigma$ a spectral density with total mass $\sigma^2$. 
\begin{enumerate}
\item The spectral density $g_\sigma$ that maximizes Shannon's entropy $S( g_\sigma )$ under
$\calC_r$, given in (\ref{e199}), 
for $r=1,\ldots,k$, is the $\gvmk (\mu_1,\ldots,\mu_{k},\kappa_1,\ldots,\kappa_{k})$
density multiplied by $\sigma^2$, viz. 
$f_\sigma^{(k)}$ $(\cdot | \mu_1,\ldots,\mu_{k},\kappa_1,\ldots,\kappa_{k})$,
where $\mu_j \in (-\pi /j, \pi /j]$ and $\kappa_j \ge 0$, for $j=1,\ldots,k$.
The values of these parameters are determined by (\ref{e108}).
\item If $g_\sigma$ is a spectral density satisfying
$\calC_r$, for $r=1,\ldots,k$, then its entropy is bounded from above as follows,
\begin{equation*}
S(g_\sigma) \le \sigma^2 \log G_0^{(k)}(\delta_1,\ldots,\delta_{k-1},\kappa_1,\ldots,\kappa_k)
- \sum_{r=1}^{k} \kappa_r (\nu_r \cos r \mu_r + \xi_r \sin r \mu_r ),
\end{equation*}
with equality iff $g_\sigma = f_\sigma^{(k)}(\cdot | \mu_1,\ldots,\mu_{k},\kappa_1,\ldots,\kappa_{k})$ a.e. 
The values of the parameters are determined by 
(\ref{e108}) with $h_1 = u_1$, i.e. the circular uniform density, where
$\nu_r$ and $\xi_r$ are given by (\ref{e201}), for $r = 1,\ldots,k$.
\item The entropy of the $\gvmk (\mu_1,\ldots,\mu_{k},\kappa_1,\ldots,\kappa_{k})$ spectral density
with total mass $\sigma^2$ is given by the formula
\begin{align*}
S \left( f_\sigma^{(k)} \right)  = & \sigma^2 \Bigg\{ 
\log G_0^{(k)}(\delta_1,\ldots,\delta_{k-1},\kappa_1,\ldots,\kappa_k) 
	- \kappa_1 A_1^{(k)}(\delta_1,\ldots,\delta_{k-1},\kappa_1,\ldots,\kappa_k)   \\ 
	&  - \sum_{r=2}^{k} \kappa_r \big[ A_r^{(k)}(\delta_1,\ldots,\delta_{k-1},\kappa_1,\ldots,\kappa_k) \cos r \delta_{r-1} \\
	&  - B_r^{(k)}(\delta_1,\ldots,\delta_{k-1},\kappa_1,\ldots,\kappa_k) \sin r \delta_{r-1} \big] \Bigg\},
\end{align*}   
where $\sum_{r=2}^{k}$ is defined as null whenever $k<2$.
\item The a.c.v.f. $\psi^{(k)}$ 
of the $\gvmk (\mu_1,\ldots,\mu_{k},\kappa_1,\ldots,\kappa_{k})$ spectral distribution can be
obtained by
$$
   \left( \begin{array}{c} \Re \psi^{(k)}(r) \\ \Im \psi^{(k)} (r)
          \end{array} \right)
   = \sigma^2 \u{R}(r\mu_1)
   \left( \begin{array}{c} A_r^{(k)}(\delta_1,\ldots,\delta_{k-1},\kappa_1,\ldots,\kappa_{k}) \\
                           B_r^{(k)}(\delta_1,\ldots,\delta_{k-1},\kappa_1,\ldots,\kappa_{k})
          \end{array} \right)
$$
and $\psi^{(k)} (-r) = \overline{\psi^{(k)}(r)}$, for $r=1,2,\ldots$.
\end{enumerate} 
\end{corollary}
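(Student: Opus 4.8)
The plan is to derive all four parts from Theorem~\ref{c100} specialized to the uniform spectral density $h_\sigma = u_\sigma$, together with a single direct evaluation of the a.c.v.f. integrals. First I would treat Parts~1 and 2 simultaneously. Since Shannon's spectral entropy satisfies $S(g_\sigma) = -I(g_\sigma \mid u_\sigma)$ by (\ref{e92}), maximizing $S(g_\sigma)$ under the constraints $\calC_1,\ldots,\calC_k$ is identical to minimizing $I(g_\sigma \mid u_\sigma)$ under those same constraints. Applying Theorem~\ref{c100} with $h_\sigma = u_\sigma$, i.e. $h_1 = u_1$, the stated convention lets me drop the omitted argument, so $G_0^{(k)}(\ldots; u_1) = G_0^{(k)}(\ldots)$ and likewise for $A_r^{(k)}, B_r^{(k)}$. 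Substituting $u_\sigma = \sigma^2/(2\pi)$ into the exponential tilt (\ref{e100}) gives
$$g_\sigma(\theta) = \frac{\sigma^2/(2\pi)}{G_0^{(k)}(\ldots)}\exp\left\{\sum_{j=1}^k \kappa_j \cos j(\theta - \mu_j)\right\} = \sigma^2 f_1^{(k)}(\theta \mid \ldots) = f_\sigma^{(k)}(\theta \mid \ldots),$$
by the form of the $\gvmk$ density (\ref{e94}), with the parameter equations (\ref{e108}) carrying over verbatim for $h_1 = u_1$; this is Part~1. Negating inequality (\ref{e105}) of Theorem~\ref{c100}.2 and using $S = -I(\cdot \mid u_\sigma)$ immediately produces the upper bound of Part~2, with the same equality condition.

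Next I would establish Part~4 by a direct computation of the a.c.v.f. For $g_\sigma = f_\sigma^{(k)}$, write $\Re\psi^{(k)}(r) = \int_{-\pi}^\pi \cos r\theta\, f_\sigma^{(k)}(\theta)\,\d\theta$ and analogously for the imaginary part, following (\ref{e201}). Substituting $\theta \mapsto \theta + \mu_1$ converts the exponent $\sum_j \kappa_j \cos j(\theta-\mu_j)$ into $\kappa_1\cos\theta + \sum_{j=2}^k \kappa_j\cos j(\theta + \delta_{j-1})$, because $j\delta_{j-1} \equiv j(\mu_1-\mu_j) \pmod{2\pi}$ by the definition $\delta_{j-1} = (\mu_1-\mu_j)\mod(2\pi/j)$. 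This is exactly the integrand defining $G_r^{(k)}$ and $H_r^{(k)}$ at $g_1 = u_1$. Expanding $\cos r(\theta+\mu_1)$ and $\sin r(\theta+\mu_1)$ by the addition formulas and collecting the resulting $G_r^{(k)}, H_r^{(k)}$ (hence $A_r^{(k)}, B_r^{(k)}$) yields the rotation $\u{R}(r\mu_1)$ acting on $(A_r^{(k)}, B_r^{(k)})^\top$, which is the asserted matrix formula; the relation $\psi^{(k)}(-r) = \overline{\psi^{(k)}(r)}$ is just the Hermitian property of the a.c.v.f.

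Finally, Part~3 follows by inserting the a.c.v.f. values of Part~4 into the equality case of Part~2. Using $\nu_r = \sigma^2(A_r^{(k)}\cos r\mu_1 - B_r^{(k)}\sin r\mu_1)$ and $\xi_r = \sigma^2(A_r^{(k)}\sin r\mu_1 + B_r^{(k)}\cos r\mu_1)$ from Part~4, the addition formulas collapse each summand of the bound into
$$\nu_r \cos r\mu_r + \xi_r \sin r\mu_r = \sigma^2\left[A_r^{(k)}\cos r(\mu_1-\mu_r) - B_r^{(k)}\sin r(\mu_1-\mu_r)\right].$$
For $r=1$ this reduces to $\sigma^2 A_1^{(k)}$, while for $r\ge 2$ the modular identity $r(\mu_1-\mu_r) \equiv r\delta_{r-1}\pmod{2\pi}$ replaces $\mu_1-\mu_r$ by $\delta_{r-1}$ inside the cosine and sine. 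Substituting this back into the equality version of Part~2 gives precisely the closed form claimed in Part~3.

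I expect the only delicate step to be the trigonometric bookkeeping underlying Parts~3 and 4: correctly tracking the rotation by $r\mu_1$ and, above all, justifying the replacement of $\mu_1-\mu_j$ by $\delta_{j-1}$ through the modular reduction, so that the shifted integrands match the defining functions $G_r^{(k)}, H_r^{(k)}$ and so that the cross terms assemble into $\u{R}(r\mu_1)$. Everything else is a direct specialization of Theorem~\ref{c100} to $h_\sigma = u_\sigma$.
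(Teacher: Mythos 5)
Your proposal is correct and takes essentially the same route as the paper: Parts 1 and 2 come from specializing Theorem \ref{c100} to $h_1=u_1$ through the relation $S(g_\sigma)=-I(g_\sigma|u_\sigma)$ of (\ref{e92}), and Part 3 comes from substituting (\ref{e108}) into the equality case of Part 2, exactly as the paper does. Your direct integral computation of Part 4 (the shift $\theta\mapsto\theta+\mu_1$ and the reduction of $\mu_1-\mu_r$ to $\delta_{r-1}$ modulo $2\pi/r$) simply re-derives, for $h_1=u_1$, the identity (\ref{e108})--(\ref{e119}) that the paper invokes without recomputation.
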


Corollary \ref{c22} can be obtained from Theorem \ref{c100} as follows.
Theorem \ref{c100}.1 and the relation between Kullback-Leibler information and entropy
(\ref{e92}) tell that the $\gvmk$ spectral distribution maximizes the entropy, 
under the given constraints on the a.c.v.f. 
The upper bound for the entropy
of a circular distribution satisfying the given constraints is provided by
Theorem \ref{c100}.2. Thus, by considering $h_1 = u_1$ , we obtain
the parts 1 and 2.
The part 3 is a consequence of the part 2. 
It is obtained by replacing $\nu_r$ and $\xi_r$, for $r=1,\ldots,k$, that appear 
in the upper bound of the entropy,
by expressions depending on the parameters of the $\gvmk$ distribution, 
through the identity (\ref{e108}). 

Thus, when partial prior information in the form of $\calC_r$, 
for $r=1,\ldots,k$, is
available and it is desired to determine the most noninformative spectral 
distribution that satisfies the known prior
information, then the $\gvmk$ spectral distribution is the optimal one.
It is in fact the most credible distribution, or the one that nature would have generated, 
when the prior information and only that information would be available. 
Maximal entropy distributions are important in many contexts. 
In statistical mechanics, the choice of a maximum entropy
distribution subject to constraints is a classical approach referred to as 
the maximum entropy principle. 
One can find various studies on spectral distributions with maximal entropy.
It is explained in Section \ref{s32} that the autoregressive model of 
order $k$ (AR($k$)) maximizes an alternative 
entropy among all time series satisfying ${\cal C}_r$, for $r=1,\ldots,k$.
Franke (1985) showed that the autoregressive and moving average time series (ARMA)
maximizes that entropy among all time series satisfying these same constraints 
on the a.c.v.f. and additional constraints on the impulse responses.
Further properties on these optimal ARMA time series can be found in Huang (1990). 
There are many other references on spectral distributions with maximal entropy:
Burg (1978), Kay and Marple (1981), Laeri (1990), etc.

The simplest situation is the following.
\begin{example}[{\rm vM} spectrum]
Corollary \ref{c22}.3 with $k=1$ yields the entropy of the vM spectral distribution,
\begin{align*}
S \left( f_\sigma^{(1)} \right) & = \sigma^2 \left\{ \log G_0^{(1)}(\kappa_1) - \kappa_1 A_1^{(1)}(\kappa_1) 
\right\} = \sigma^2 \left\{ \log I_0(\kappa_1) - \kappa_1 \frac{I_1(\kappa_1)}{I_0(\kappa_1)} \right\},
\end{align*}
for $\kappa_1 \ge 0$. 
By noting that $B_r^{(1)}(\kappa_1)=0$, for $r=1,2,\ldots$,
Corollary \ref{c22}.4 with $k=1$ gives the a.c.v.f. of the vM spectral distribution as 
\begin{align}									\label{e159}
   \left( \begin{array}{c} \Re \psi^{(1)}(r) \\ \Im \psi^{(1)} (r)
          \end{array} \right)
   = \sigma^2 A_r^{(1)}(\kappa_1) 
          \left( \begin{array}{c} 
			\cos r \mu_1 \\ \sin r \mu_1
          \end{array} \right)
   = \sigma^2 \frac{ I_r(\kappa_1) }{ I_0(\kappa_1) }
   \left( \begin{array}{c} 
                        \cos r \mu_1 \\ \sin r \mu_1
          \end{array} \right),
\end{align}
and $\psi^{(1)} (-r) = \overline{\psi^{(1)}(r)}$, for $r=1,2,\ldots$.
When $\kappa_1 > 0$,
the vM spectral distribution is axially symmetric about the origin iff $\mu_1 = 0$.
In other terms and according to (\ref{e159}), 
the ${\rm GvM}_1$ or vM time series is real-valued iff $\mu_1 = 0$.  
\end{example}

\subsection{Temporal entropy}  			                                       \label{s23}

This section provides a strictly stationary Gaussian-GvM 
time series that follows the maximal entropy principle in the 
time domain, in addition to the maximal entropy principle in the frequency domain, 
under the previous constraints on the a.c.v.f.
Consider the complex-valued Gaussian time series $\{X_j\}_{j \in \Z}$ 
in $\L2$ that is strictly stationary with mean zero. 
This time series is introduced at the end of Section \ref{s21}.
Define $U_j = \Re X_j$ and $V_j = \Im X_j$, $\forall j \in \Z$.
Let $n \ge 1$ and $j_1<\ldots < j_n \in\mathbb{Z}$.
Consider the random vector $(U_{j_1},\ldots,U_{j_n},V_{j_1},\ldots,V_{j_n})$
and denote by $p_{j_1,\ldots,j_n}$ its joint density. 
Thus $p_{j_1,\ldots,j_n}$ is the $2n$-dimensional normal density 
with mean zero and $2 n \times 2 n$ covariance matrix 
\begin{align}									\label{e97}  
\u{\Sigma}_{j_1,\ldots,j_{n}} & = \var \left( \left( U_{j_1},\ldots,U_{j_n},V_{j_1},\ldots,V_{j_n} \right) \right) 
= \E \left[ \left(
\begin{array}{cc}
\u{U} {\u{U}}^\top &  {\u{U}} {\u{V}}^\top \\
\u{V} {\u{U}}^\top &  \u{V} {\u{V}}^\top
\end{array}
\right) \right], 
\end{align} 
where $\u{U} = \left( U_{j_1},\ldots,U_{j_n} \right)^\top$ and $\u{V} = \left( V_{j_1},\ldots,V_{j_n} \right)^\top$.
According to (\ref{e129}), the elements of $\u{\Sigma}_{j_1,\ldots,j_{n}}$ are given by 
\begin{align*}      
\E [ U_{j_l} U_{j_m} ] & = \psi_{UU} ( j_l - j_m ), \;
\E [ V_{j_l} V_{j_m} ]   = \psi_{VV} ( j_l - j_m ), \nonumber \\
\E [ U_{j_l} V_{j_m} ] & = \psi_{UV} ( j_l - j_m ) \; \text{ and } \;
\E [ V_{j_l} U_{j_m} ]   = \psi_{VU} ( j_l - j_m ),
\end{align*}
with $\psi_{VU} ( j_l - j_m ) = \psi_{UV} ( j_m - j_l )$,
for $l,m = 1,\ldots,n$.
Because $\u{\Sigma}_{j_1,\ldots,j_{n}}$ depends on $j_1,\ldots,j_{n}$ only through 
$l_1 = j_2 - j_1, \ldots, l_{n-1} = j_n - j_{n-1}$, we consider the alternative notation
$\u{\Sigma}^{l_1,\ldots,l_{n-1}} = \u{\Sigma}_{j_1,\ldots,j_{n}}$. 

An important subclass of complex-valued normal random vectors is made by the 
radially symmetric ones, which is
obtained by setting the mean and the pseudo-covariance matrix equal to zero.
That is, the Gaussian vector 
$\u{X} = (X_{j_1},\ldots,X_{j_n})^\top$, where
$X_{j_l} =  U_{j_l} + \i V_{j_l}$, for $l = 1, \ldots, n$,
is radially symmetric iff $\E[ \u{X} ] = \u{0}$ and
$\E \left[ \u{X} \u{X}^\top \right] = \u{0}$. 
A radially symmetric complex normal random vector 
$\u{X}$ is characterized by the fact that,
$\forall \theta \in (-\pi,\pi]$, 
$\e^{ \i \theta} \u{X} \sim \u{X}$. Because these vectors and the related processes  
are often used in signal processing, we consider them in this section.  
 
More generally, by assuming neither stationarity nor normality, we  
define the temporal entropy of the complex-valued time series 
$\{X_j\}_{j \in \Z}$ at times $j_1 < \ldots < j_n \in \Z$ 
in terms of Shannon's entropy of  
$\left( U_{j_1},\ldots,U_{j_n},\right.$ $\left. V_{j_1},\ldots,V_{j_n} \right)$, precisely as 
\begin{align}									\label{e93} 
T_{j_1,\ldots,j_n} = - \int_{-\infty}^\infty & \ldots \int_{-\infty}^\infty    
		       \log p_{j_1,\ldots,j_n}(u_1,\ldots,u_{n},v_1,\ldots,v_n) \nonumber \\
		     & p_{j_1,\ldots,j_n}(u_1,\ldots,u_{n},v_1,\ldots,v_n) \d u_1 \ldots \d u_n 
		       \d v_1 \ldots \d v_n ,
\end{align}
whenever the density $p_{j_1,\ldots,j_n}$ exists. 
Under strict stationarity, the temporal entropy (\ref{e93}) becomes invariant under time shift
and we can thus define the alternative notation 
$T^{l_1,\ldots,l_{n-1}} = T_{j_1,\ldots,j_n}$.

Let us now mention two known and important information theoretic 
results for the Gaussian distribution. The first one is the formula of the Gaussian entropy:
\begin{itemize}
\item[]
{\it if
$p_{j_1,\ldots,j_n}$ is the $2 n$-dimensional Gaussian density with arbitrary mean and covariance
matrix $\Sigma_{j_1,\ldots,j_{n}}$, then the temporal entropy (\ref{e93}) is given by
}
\begin{align}									\label{e107}
T_{j_1,\ldots,j_{n}} & = \left\{ 1 + \log (2\pi) \right\} n + \frac{1}{2} \log \det
                        \u{\Sigma}_{j_1,\ldots,j_{n}}.
\end{align}
\end{itemize}
The second result is the maximum entropy property of the Gaussian distribution: 
\begin{itemize}
\item[]
{\it
among random vectors $\left(U_{j_1},\ldots,U_{j_n},V_{j_1},\ldots,V_{j_n}\right)$
having arbitrary density with fixed covariance matrix $\Sigma_{j_1,\ldots,j_n}$, 
the one that is normally distributed  
maximizes Shannon's entropy (\ref{e93}). The maximum of the entropy is given by
(\ref{e107}).
}
\end{itemize}

We now consider the previous constraints on the a.c.v.f. (\ref{e199}) and search for the
(strictly) stationary time series, with mean and pseudo-covariances null,  
that maximizes the temporal entropy.
\begin{theorem}[Maximal Shannon’s temporal entropy distribution]     		\label{t200}
\noindent
Consider the class of complex-valued and stationary time series
$\{ X_j \}_{j \in \Z}$ with mean null, variance $\sigma^2$, for some
$\sigma \in (0,\infty)$, and pseudo-covariances null.  
Denote by $\psi$ the a.c.v.f. of $\{ X_j \}_{j \in \Z}$, $\nu = \Re \psi$ and 
$\xi = \Im \psi$.
\begin{enumerate}
\item If the a.c.v.f. $\psi$ satisfies $\calC_r$ given in (\ref{e199}) 
or in (\ref{e201}), for $r=1,\ldots,k$,
thus 
$\psi(1) = \psi_1=\nu_1+ \i \,\xi_1,\ldots,\psi(k) = \psi_k=\nu_k + \i \, \xi_k$, 
then the time series $\{ X_j \}_{j \in \Z}$ in the above class that maximizes 
Shannon’s temporal entropy (\ref{e93}) 
with $n=k+1$ and $j_1=1,\ldots,j_{k+1}=k+1$
is the one for which the corresponding 
double f.d.d. (\ref{e42}) with $j_1=1,\ldots,j_{k+1}=k+1$ is Gaussian, 
with mean zero and with $2 (k+1) \times 2 (k+1)$ covariance matrix 
$\u{\Sigma}(k) = \u{\Sigma}^{1,\ldots,1}$ given by (\ref{e97}) with 
\begin{align*}
\psi_{UU} ( r ) = \psi_{VV} ( r )   = \frac{\nu_r}{2} \; 
\text{ and } \; \psi_{UV} ( r ) = \psi_{VU} ( -r )  = - \frac{\xi_r}{2},
\end{align*}
for $r=1,\ldots,k$. 
\item The corresponding value of the temporal entropy is given by
\begin{align*}
T(k) & = \left\{ 1 + \log (2\pi) \right\} (1 + k ) + \frac{1}{2} \log \det
                        \u{\Sigma}(k).
\end{align*}
\end{enumerate}
\end{theorem}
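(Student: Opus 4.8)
The plan is to reduce the statement to the two Gaussian information-theoretic facts quoted immediately before the theorem. First I would note that the temporal entropy $T_{1,\ldots,k+1}$ in (\ref{e93}) depends on the time series only through the joint law of the real vector $(U_1,\ldots,U_{k+1},V_1,\ldots,V_{k+1})$, and that the maximum-entropy property of the Gaussian says that, among all such vectors sharing one fixed covariance matrix, the mean-zero Gaussian one maximizes (\ref{e93}) with maximal value given by (\ref{e107}). Hence the entire argument hinges on two things: showing that the covariance matrix of this vector is \emph{the same} for every member of the class satisfying $\calC_1,\ldots,\calC_k$, and confirming that a Gaussian member with that covariance actually lies in the class.

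Next I would translate the constraints into the entries of $\u{\Sigma}(k)$. Writing $X_j = U_j + \i V_j$ with mean zero and expanding both $\psi(r) = \E[X_{j+r}\overline{X_j}]$ and the pseudo-covariance $\E[X_{j+r}X_j]$ into real and imaginary parts gives $\nu_r = \psi_{UU}(r) + \psi_{VV}(r)$ and $\xi_r = \psi_{VU}(r) - \psi_{UV}(r)$ from the covariance, while the vanishing of the pseudo-covariance forces $\psi_{UU}(r) = \psi_{VV}(r)$ and $\psi_{VU}(r) = -\psi_{UV}(r)$. Solving these relations yields $\psi_{UU}(r) = \psi_{VV}(r) = \nu_r/2$ and $\psi_{UV}(r) = -\xi_r/2$, and combined with $\xi_{-r} = -\xi_r$ they reproduce the claimed identities $\psi_{UV}(r) = \psi_{VU}(-r) = -\xi_r/2$. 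The decisive observation is that with $n = k+1$ and consecutive times $j_1 = 1,\ldots,j_{k+1} = k+1$, every entry of $\u{\Sigma}_{1,\ldots,k+1}$ carries a lag $j_l - j_m \in \{-k,\ldots,k\}$, so all entries are determined by $\psi(0) = \sigma^2$ together with $\psi(\pm 1),\ldots,\psi(\pm k)$ — precisely the data fixed by $\calC_1,\ldots,\calC_k$ and the Hermitian symmetry $\psi(-r) = \overline{\psi(r)}$. Thus $\u{\Sigma}(k)$ is identical across the whole subclass meeting the constraints.

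With the covariance matrix pinned down, Part 1 is immediate from the maximum-entropy property: among all admissible double f.d.d.\ with covariance $\u{\Sigma}(k)$ the mean-zero Gaussian one maximizes (\ref{e93}), so it is the maximizer of the temporal entropy over the class. Part 2 is then the Gaussian entropy formula (\ref{e107}) evaluated at $n = k+1$, which delivers $T(k)$ directly.

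The step I expect to require the most care is existence: I must verify that the mean-zero, radially symmetric Gaussian time series with the prescribed covariance genuinely belongs to the stated class, so that the bound is attained inside it rather than being merely an upper bound. This amounts to checking that $\u{\Sigma}(k)$ is nonnegative definite, which follows from the n.n.d.-ness of the complex Hermitian matrix (\ref{e206}) via the standard correspondence between a circularly symmetric complex Gaussian vector with Hermitian covariance $\u{\Gamma}$ and its $2n$-dimensional real representation, whose real covariance is n.n.d.\ if and only if $\u{\Gamma}$ is; since (\ref{e206}) is assumed n.n.d., so is $\u{\Sigma}(k)$. The existence of a strictly stationary, radially symmetric, Gaussian series with this a.c.v.f.\ and null pseudo-covariance is then supplied by Herglotz theorem, the characterization of the a.c.v.f., and the spectral representation (\ref{e145}) discussed in Section \ref{s21}, which produces a legitimate maximizer within the class.
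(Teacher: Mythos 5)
Your proposal is correct and follows essentially the same route as the paper's proof: derive the forced covariance identities $\psi_{UU}(r)=\psi_{VV}(r)=\nu_r/2$, $\psi_{UV}(r)=\psi_{VU}(-r)=-\xi_r/2$ so that $\u{\Sigma}(k)$ is pinned down by $\calC_1,\ldots,\calC_k$ across the whole class, invoke the Gaussian maximum-entropy property and the Gaussian entropy formula, and secure existence of a centered, radially symmetric, strictly stationary Gaussian member of the class from the n.n.d.\ structure of $\psi$. The only cosmetic difference is in the existence step: the paper constructs the real Gaussian vector explicitly through the characteristic function $\exp\{-q(\u{u},\u{v})/2\}$ built from the n.n.d.\ quadratic form of $\psi$ and then verifies the covariance and null pseudo-covariance, whereas you appeal to the standard complex--real Gaussian correspondence together with Herglotz theorem and the a.c.v.f.\ characterization, which amounts to the same argument.
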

\begin{proof} 1.a.
This initial part of the proof shows that for any a.c.v.f. $\psi$, there exists a 
complex-valued Gaussian time series that is strictly
stationary, centered and radially symmetric. 
Let $n \ge 1$, $u_j, v_j \in \R$, $c_j = u_j - \i v_j$, for $j = 1, \ldots, n$,
let $j_1 < \ldots < j_n \in \Z$,
$\u{u} = (u_1,\ldots,u_n)^\top$, $\u{v} = (v_1,\ldots,v_n)^\top \in \R^n$ and define
\begin{align*}
q ( \u{u} , \u{v} ) & = \frac{1}{2} \sum_{l=1}^n \sum_{m=1}^n c_l \overline{c_m}
\psi ( j_l - j_m ).
\end{align*}
Then $q ( \u{u} , \u{v} ) \ge 0$ implies
\begin{align}                                                                           \label{e382}
q ( \u{u} , \u{v} ) & = \frac{1}{2} \sum_{l=1}^n \sum_{m=1}^n ( u_l - \i v_l )  ( u_m + \i v_m ) \{
                                \nu ( j_l - j_m ) + \i \xi ( j_l - j_m ) \} \nonumber \\
                    & = \frac{1}{2} \sum_{l=1}^n \sum_{m=1}^n ( u_l u_m + v_l v_m  ) \nu ( j_l - j_m ) -
                                          ( u_l v_m - v_l u_m  ) \xi ( j_l - j_m ).
\end{align}
Define
$\u{U} = (U_{j_1},\ldots,U_{j_n})^\top$ and 
$\u{V} = (V_{j_1},\ldots,V_{j_n})^\top$. Assume
$\left( \u{U}^\top, \u{V}^\top \right)$ normally distributed with mean zero and
covariance matrix $\u{\Sigma}_{j_1 , \ldots , j_n}$, as in (\ref{e97}).

A particular choice of $\u{\Sigma}_{j_1 , \ldots , j_n}$ can be obtained by setting
\begin{align*}                                                                  
\varphi ( \u{u}, \u{v} ) & =
\E \left[
\exp \left\{
\i \left( \u{u}^\top, \u{v}^\top \right)  { \u{U} \choose \u{V} }
\right\}
\right] = \exp \left\{ - \frac{1}{2} q ( \u{u} , \u{v} ) \right\},
\end{align*}
leading to
\begin{align*}
\left( \u{u}^\top, \u{v}^\top \right)
            \u{\Sigma}_{j_1 , \ldots , j_n} { \u{u} \choose \u{v} } 
& = q ( \u{u} , \u{v} ) .
\end{align*}
This, (\ref{e382}) and (\ref{e97}) yield
\begin{align}							\label{e122}                                                              
\E [ U_{j_l} U_{j_m} ] & = \frac{1}{2} \nu ( j_l - j_m ), \;
\E [ V_{j_l} V_{j_m} ] =   \frac{1}{2} \nu ( j_l - j_m ), \nonumber \\
\E [ U_{j_l} V_{j_m} ] & = - \frac{1}{2} \xi ( j_l - j_m ), \; 
\E [ V_{j_l} U_{j_m} ] = \frac{1}{2} \xi ( j_l - j_m )
\end{align}
and therefore $\xi ( j_l - j_m ) = - \xi ( j_m - j_l )$, 
for $l,m = 1, \ldots, n$.
Define
$\u{X} = (X_{j_1},\ldots,X_{j_n})^\top$, where
$X_{j_l} =  U_{j_l} + \i V_{j_l}$, for $l = 1, \ldots, n$.
We obtain the covariance matrix
\begin{align*}
\var ( \u{X} ) & = \E \left[ \u{X} \overline{\u{X}}^\top \right] 
        =  \E \left[ ( \u{U} + \i \u{V} ) ( \u{U} - \i \u{V} )^\top  \right] 
        =  \E \left[ \u{U} \u{U}^\top + \u{V} \u{V}^\top + \i \left(
                \u{V} \u{U}^\top - \u{U} \u{V}^\top   \right) \right]  \\
        & = \frac{1}{2} \left( \nu ( j_l - j_m ) + \nu ( j_l - j_m ) + \i \left[
                \xi ( j_l - j_m ) - \{ - \xi ( j_l - j_m ) \} \right] \right)_{l,m=1,\ldots,n} \\
        & = \left( \nu ( j_l - j_m ) + \i \xi ( j_l - j_m ) \right)_{l,m=1,\ldots,n} 
        = \left( \psi ( j_l - j_m ) \right)_{l,m=1,\ldots,n} 
\end{align*}
and the pseudo-covariance matrix
\begin{align*}
\E \left[ \u{X} \u{X}^\top \right] 
        & =  \E \left[ ( \u{U} + \i \u{V} ) ( \u{U} + \i \u{V} )^\top  \right] 
        =  \E \left[ \u{U} \u{U}^\top - \u{V} \u{V}^\top + \i \left(
                \u{V} \u{U}^\top + \u{U} \u{V}^\top   \right) \right]  \\
        & = \frac{1}{2} \left( \nu ( j_l - j_m ) - \nu ( j_l - j_m ) + \i \left[
                \xi ( j_l - j_m ) + \{ - \xi ( j_l - j_m ) \} \right] \right)_{l,m=1,\ldots,n} \\
        & = \left( 0 + \i 0 \right)_{l,m=1,\ldots,n} 
        = \u{0},
\end{align*} 
as desired. 
We have thus established the existence of a
complex-valued Gaussian time series $\{ X_t \}_{t \in \R}$ that is strictly
stationary, radially symmetric and centered. \\
1.b.
Consider $n=k+1$ and $j_1=1,\ldots,j_{k+1}=k+1$. Under
${\cal C}_r$, for $r=1,\ldots,k$, 
$\var ( \u{X} )$ is entirely determined: it is the $(k+1) \times (k+1)$, n.n.d. and Toeplitz matrix
(\ref{e206}).
The pseudo-covariance matrix and the mean vector are null and thus also determined.  
We know from (\ref{e122}) that, for $r=1,\ldots,k+1$, 
\begin{align*}
\psi_{UU}(r) = \psi_{UV}(r) = \frac{1}{2} \nu (r) = \frac{\nu_r}{2} 
\; \text{ and } \;
\psi_{UV}(r) = \psi_{VU}(-r) = -\frac{1}{2} \xi (r) = - \frac{\xi_r}{2}.
\end{align*} 
So the covariance matrix of $(\u{U}^\top,\u{V}^\top)$ is entirely determined 
by ${\cal C}_r$, for $r=1,\ldots,k$, and it is the $2 (k+1) \times 2 (k+1)$
matrix $\Sigma_{1,\ldots,k+1} = \Sigma^{1,\ldots,1}$. Clearly, 
$\E \left[ (\u{U}^\top,\u{V}^\top) \right] = 0$. 
The second information theoretic result 
for the Gaussian distribution, just above, concludes the 
proof Theorem \ref{t200}.1.
\\
2. The second information theoretic result for the Gaussian distribution, viz. 
(\ref{e107}), leads directly to the entropy formula in Theorem \ref{t200}.2.
\end{proof} 
So when $\{X_j\}_{j \in \Z}$ is the strictly stationary Gaussian-GvM time series, 
both spectral and temporal Shannon's entropies are maximized under the constraints
$\calC_r$, for $r=1,\ldots,k$.

\section{Some computational aspects}					\label{s3}

The following computational aspects are studied:
the computation of the integral functions of the $\gvm2$ time series in Section \ref{s31},
the estimation of the $\gvmk$ spectral distribution in Section \ref{s32} and the 
computation of the $\gvmk$ spectral d.f. in Section \ref{s33}.

\subsection{Integral functions of the $\mathbf{GvM_2}$ time series}	\label{s31} 

This section provides some series expansions for some integral functions
appearing with the $\gvmk$ spectral distribution.
Indeed, the information theoretic results of Section \ref{s2} require 
the constants or integral functions $G_r^{(k)}$, for $r = 0, \ldots , k$, and $H_r^{(k)}$, for
$r = 1, \ldots , k$. 
They are integrals over a bounded domain of smooth integrands 
and therefore numerical integration should perform well. Alternatively,
one can evaluate these integral functions by Fourier series expansions. 
Gatto (2007) provides expansions for some of these constants and in particular for 
those with $k=2$, that are given here.
Define 
$$ {\rm ep}_r = \left\{
\begin{array}{ll}
  1, &  {\rm if} \; r \; \text{ is even and positive}, \\
  0, &  {\rm otherwise}. 
\end{array}
\right.
$$
Let 
$\delta \in [0, \pi)$ and $\kappa_1,\kappa_2 \ge 0$. Then 
the following expansions hold for
$r=0,1,\ldots$,
\begin{align}                                                                        \label{e160}
G_r^{(2)}(\delta,\kappa_1,\kappa_2) = & \;   
I_0(\kappa_1) I_{\frac{r}{2}}(\kappa_2) \cos r \delta \; {\rm ep}_r 
+I_0(\kappa_2) I_r(\kappa_1) \nonumber 
\\ & + \sum_{j=1}^\infty \cos 2 j \delta \; I_j(\kappa_2) \left\{
I_{2 j + r}(\kappa_1) + I_{\mid 2 j - r \mid}(\kappa_1) \right\},
\end{align}
and
\begin{align}                                                                        \label{e166}
H_r^{(2)}(\delta,\kappa_1,\kappa_2) = &   
- I_0(\kappa_1) I_{\frac{r}{2}}(\kappa_2) \sin r \delta \; {\rm ep}_r
\nonumber \\ & 
+ \sum_{j=1}^\infty \sin 2 j \delta \; I_j(\kappa_2) \left\{
I_{2 j + r}(\kappa_1) - I_{\mid 2 j - r \mid}(\kappa_1) \right\}.
\end{align}
We can deduce from the two above expansions that
$G_r$ and $H_r$ inherit the asymptotic behavior of the Bessel function $I_r$, for large
$r$. It follows from Abramowitz and Stegun (1972, 9.6.10, p. 375)
that $I_r(z) = (z/2)^r \{ r \Gamma(r)\}^{-1}
\left\{1 + \O\left(r^{-1}\right)\right\}$, 
as $r \to \infty$. This and the Stirling approximation yield
$I_r(z) = ( 2 \pi r)^{-1/2}$ $ \{\e z / (2 r) \}^r
\left\{1 + \O\left(r^{-1}\right)\right\}$, as $r \to \infty$.
Hence $I_r$ decreases rapidly to zero as $r$ increases. 
We see that the same holds for $G_r$ and $H_r$.

\subsection{Estimation of the GvM spectral distribution}				\label{s32}

This section concerns the estimation problem. 
After reviewing some classical results of spectral estimation, it presents 
an estimator to the parameters of the GvM spectral distribution. 

A classical estimator of the spectral density is the periodogram. 
We remember that it is based on the discrete Fourier transform of the sample 
a.c.v.f., i.e. on    
$$
\Lambda_n (j)
= \sum_{r=-(n-1)}^{n-1} \hat{\psi}_n (r) \e^{- \i \frac{2 \pi j r}{n}},
$$
for $j = \lfloor (n-1)/2 \rfloor ,\ldots,-1, 1 ,\ldots , \lfloor
n/2 \rfloor$, $\hat{\psi}_n$ being the sample a.c.v.f. (\ref{e187}), $n$ the sample size and
$\lfloor \cdot \rfloor$ the floor function. Because of its nonparametric  
nature, the periodogram is well-suited for detecting particular features such as a periodicity, 
which may not be identified by a parametric estimator. 
However, its irregular nature may not be desirable in some contexts and it does not 
result from an important optimality criterion.

One of the earliest studies on maximum entropy spectral distributions
is Burg (1967), who considered $B(f_\sigma) = \int_{-\pi}^{\pi} \log f_\sigma(\theta) \d \theta$
as measure of entropy of the spectral density $f_\sigma$. This entropy is different than
our adaptation of Shannon's entropy, viz. $S(f_\sigma)$ given in (\ref{e92}). We can easily relate Burg's
entropy of the spectral density $f_\sigma$ to the Kullback-Leibler information as follows,
\begin{align}                                                                           \label{e85}
B ( f_\sigma ) & = 2 \pi \left\{ \log \frac{\sigma^2}{2 \pi} - \frac{1}{\sigma^2}
                        I ( u_\sigma | f_\sigma ) \right\}
                 = 2 \pi \left\{ \log \frac{\sigma^2}{2 \pi} -
                        I ( u_1 | f_1 ) \right\}.
\end{align}
This shows that
maximizing Burg's entropy amounts to minimize the re-directed Kullback-Leibler information,
instead of the usual Shannon's entropy.
For real-valued time series, it turns out that the spectral density estimator that maximizes the entropy
(\ref{e85}) subject to the constraints $\calC_r$, for $r = 1, \ldots, k$, with $k \le n - 1$, is equal
to the autoregressive estimator of order $k$. This autoregressive estimator 
is given by the formula of the spectral density the AR($k$) model which
has been fitted to the sample of $n$ consecutive 
values of the time series. For more details refer e.g. to
p. 365-366 of Brockwell and Davis (1991). 

Estimators of the parameters of the $\gvmk$ spectral distribution can be obtained from a
direct generalization of the trigonometric method of moments estimator for the $\gvmk$ 
circular distribution, which is introduced by Gatto (2008). This estimator is the
circular version of the method of moments estimators.
Consider the $\gvmk$ spectral distribution with unknown parameters $\mu_1,\ldots,\mu_k$ and
$\kappa_1,\ldots,\kappa_k$, for some $k \in \{ 1, \ldots, n-1 \}$. Consider the $r$-th a.c.v.f. condition $\calC_r$ in which
the spectral density $g_r$ is taken equal to the $\gvmk$ spectral density with variance $\sigma^2$, viz. 
$\sigma^2$ times the circular density (\ref{e94}), and in which the quantity $\psi_r \in \C$ is 
replaced by the sample a.c.v.f. at $r$, namely by $\hat{\psi}_n(r)$, $r = 1,\ldots,k$, cf. (\ref{e187}).
The resulting $r$-th equation can be re-expressed in a similar way to (\ref{e119}), which in turn leads to 
\begin{align}									\label{e163}
   \left( \begin{array}{c} \Re \hat{\psi}_n(r) \\ \Im \hat{\psi}_n(r)
   	  \end{array} \right) 
   = \sigma^2 \u{R}(r\mu_1)
   \left( \begin{array}{c} A_r^{(k)}(\delta_1,\ldots,\delta_{k-1},\kappa_1,\ldots,\kappa_{k}) \\
                           B_r^{(k)}(\delta_1,\ldots,\delta_{k-1},\kappa_1,\ldots,\kappa_{k})
   	  \end{array} \right),
\end{align}
for $r=1,\ldots,k$, with $k \le n-1$. This gives a system of $2 k$ real 
equations and $2 k$ unknown real parameter. The values 
of $\mu_1$, $\delta_1,\ldots,\delta_{k-1}$, $\kappa_1,\ldots,\kappa_{k}$
that solve this system of equations are the resulting estimators and they can be denoted
$\hat{\mu}_1$, $\hat{\delta}_1,\ldots,\hat{\delta}_{k-1}$, $\hat{\kappa}_1,\ldots,\hat{\kappa}_{k}$.
We now give two examples. 
\begin{example}[{\rm vM} spectrum]
When $k=1$ we have the basic vM unimodal spectral distribution.
Because $B_1^{(1)}(\kappa_1)=0$, we have
the estimating equation
\begin{align*}       
   \left( \begin{array}{c} \Re \hat{\psi}_n(1) \\ \Im \hat{\psi}_n (1)
          \end{array} \right)
   = \sigma^2 A_1^{(1)}(\kappa_1)
          \left( \begin{array}{c}
                        \cos \mu_1 \\ \sin \mu_1
          \end{array} \right)
   = \sigma^2 \frac{ I_1(\kappa_1) }{ I_0(\kappa_1) }
   \left( \begin{array}{c}
                        \cos \mu_1 \\ \sin \mu_1
          \end{array} \right),
\end{align*}
giving two equations and two unknown values, namely $\mu_1$ and $\kappa_1$.
The solutions are the estimators $\hat{\mu}_1$ and $\hat{\kappa}_1$.
For $\kappa_1>0$, if $\mu_1=0$ is given, then we have axial symmetry 
about the origin and so the corresponding time series is real-valued.
The two estimating equations reduce to the single equation
\begin{align}								\label{e185} 
\frac{\hat{\psi}_n(1)}{\sigma^2} & = A_1^{(1)}(\kappa_1),
\end{align}
whose solution is the estimator $\hat{\kappa}_1$. 
Note that Amos (1974) showed that $A_1^{(1)}$ has positive derivative over $(0,\infty)$.
It follows essentially from this fact that 
$A_1^{(1)}$ is a strictly increasing and differentiable 
probability d.f. over $[0,\infty)$. So its inverse function is easily computed. 
\end{example}
\begin{example}[$\gvm2$ spectrum]
When $k=2$ we retrieve the practical $\gvm2$ unimodal or bimodal spectral distribution.  
In the estimating equations (\ref{e163}) with $k=2$, we
can use the series expansions of the constants given by (\ref{e160}) and (\ref{e166}).
As previously mentioned, with $\kappa_1, \kappa_2 >0$, 
the $\gvm2$ distribution is axially symmetric 
around the axis $\mu_1$ iff
$\delta_1 = \delta^{(1)} = 0$ or $\delta_1 = \delta^{(2)} = \pi / 2$. 
With these values of $\delta_1$ and with
$\mu_1=0$, the axial symmetry is about the origin and so 
the corresponding time series is real-valued.
We note that 
$B_r^{(2)}\left(\delta^{(j)}, \kappa_1 , \kappa_2\right) = 0$, 
for $r = 1 , 2$ and for the cases $j=1,2$.
Because of these equalities and because $\Im \hat{\psi}_n(r) = 0$, for $r = 1 , 2$,
the estimating equations (\ref{e163}) simplify to
\begin{align*}
\frac{\hat{\psi}_n(r)}{\sigma^2} & =  A_r^{(2)} \left( \delta^{(j)},\kappa_1,\kappa_{2} \right), 
\end{align*} 
for $r = 1 , 2$ and for the two cases $j = 1 , 2$.
These estimating equations appear as the natural generalization of the
estimation equation (\ref{e185}) of the real-valued vM time series.
For each one of these two cases, we have two 
equations and two unknown values, namely $\kappa_1$ and $\kappa_2$. 
The solutions are the estimators $\hat{\kappa}_1$ and $\hat{\kappa}_2$.
\end{example}

\subsection{GvM spectral distribution function}					\label{s33} 

A formula for the $\gvmk$ spectral d.f. can be obtained in terms of a series as follows. 
Let $\psi^{(k)}$ denote the a.c.v.f. and let $f_\sigma^{(k)}$ denote the spectral density
of the $\gvmk$ time series with variance $\sigma^2$.
It follows from $\Re \psi^{(k)}(-r) = \Re \psi^{(k)}(r)$ and $\Im \psi^{(k)} (-r) =
- \Im \psi^{(k)}(r)$, for $r=1,2,\ldots$,
and from (\ref{e108}) that
\begin{eqnarray*}
\lefteqn{
f_\sigma^{(k)}(\theta | \mu_1,\ldots,\mu_k,\kappa_1,\ldots,\kappa_k) } \\
& = & \frac{1}{2 \pi} \sum_{r = -\infty}^{\infty} \psi^{(k)}(r) \exp \{ -\i  r \theta \} \\
& = & \frac{1}{2 \pi} \left( 1 + 2 \sum_{r=1}^{\infty}
\Re \psi^{(k)} (r)  \cos r \theta + \Im \psi^{(k)} (r) \sin r \theta \right) \\
& = &
\frac{\sigma^2}{2 \pi} \left( 1 + 2 \sum_{r=1}^{\infty} ( \cos r \theta , \sin r \theta) \u{R}(r\mu_1)
\left( \begin{array}{c} A_r^{(k)}(\delta_1,\ldots,\delta_{k-1},\kappa_1,\ldots,\kappa_{k}) \\
                        B_r^{(k)}(\delta_1,\ldots,\delta_{k-1},\kappa_1,\ldots,\kappa_{k})
\end{array} \right) \right),
\end{eqnarray*}
$\forall \theta \in (-\pi,\pi]$. Pointwise convergence is due to Dirichlet's theorem
(cf. e.g. Pinkus and Zafrany, 1997, p. 47).
The term by term integration of the Fourier series of a piecewise continuous function
converges uniformly towards the integral of the original function
(cf. e.g. Pinkus and Zafrany, 1997, p. 77).
So the $\gvmk$ spectral d.f. admits the series representation
\begin{eqnarray*}
\lefteqn{
F_\sigma^{(k)}(\theta|\mu_1,\ldots,\mu_k,\kappa_1,\ldots,\kappa_k) } \\
& = & \int_{-\pi}^\alpha f_\sigma(\alpha | \mu_1,\ldots,\mu_k,\kappa_1,\ldots,\kappa_k) \d \alpha \nonumber \\
& = & \frac{\sigma^2}{2 \pi} \Bigg( \theta + 2  \sum_{r=1}^{\infty}
	\frac{1}{r} \big[ A_r^{(k)}(\delta_1,\ldots,\delta_{k-1},\kappa_1,\ldots,\kappa_k) 
	\{ \sin r(\theta - \mu_1) + \sin r \mu_1 \} \nonumber \\
& & - B_r^{(k)}(\delta_1,\ldots,\delta_{k-1},\kappa_1,\ldots,\kappa_k) 
	\{ \cos r(\theta - \mu_1) - \cos r \mu_1 \} \big] \Bigg),
	\; \forall \theta \in (-\pi, \pi],
\end{eqnarray*}
where the convergence is uniform.
The order of the $r$-th summand is $r$
times smaller w.r.t. the original Fourier series, so we can expect rapid convergence.
When $k=2$, we can use the series expansions (\ref{e160}) and (\ref{e166}) for the computation this d.f. 
This expansion of the spectral d.f. can be used in conjunction with (\ref{e184}) for 
the computation of the $\L2$-norm of the increments of the spectral process
of the $\gvmk$ time series. 

\section{Concluding remarks}								\label{s4} 

This chapter presents an innovative application of the GvM distribution of directional
statistics to the analysis of stationary time series.
As already mentioned, these developments are merely a first analysis to 
the GvM and Gaussian-GvM time series.
The scope is limited to the presentation of some first few results on 
the GvM spectrum and further developments are required. For example,
simulation algorithms for the Gaussian-GvM time series could be developed. 
The GvM spectrum is motivated by rather theoretical considerations and one 
is aware that ad hoc spectra are often used in applied domains.
For example, the Pierson-Moskowitz spectrum is widely used for the 
stationary modelling of ocean waves, in the context of naval construction. We refer e.g. to 
p. 315-316 of Lindgren (2012) for a list of commonly used spectra (that includes the
Pierson-Moskowitz spectrum). These spectra correspond to continuous time stationary models, 
but wrapping them around the circle of circumference $2 \pi$ leads 
to the spectra of these processes sampled at integer times, viz. of time series.     
Lastly, we note that the connection between spectral and circular distributions is well-known:
the wrapped Cauchy circular distribution is the normalized spectral distribution of the  
the AR(1) time series and the cardioid distribution is the normalized spectral distribution of the 
first order moving average (MA(1)) time series. This connection is exploited by Tanigichi et al. (2020)
in order to construct new circular distributions. 
It is therefore in the opposite direction that this chapter exploits this connection.

\section{References}

\noindent
Abramowitz, M., Stegun, I. E. (1972), {\em Handbook of
Mathematical Functions with Formulas, Graphs, and Mathematical
Tables}, Dover Publications (9-th printing conform to the 10-th original printing).

\noindent
Amos, D. E. (1974), ``Computation of modified Bessel functions and their ratios'',
{\it Mathematics of Computation}, 28, 239-251.

\noindent 
Asmussen, S., Glynn P. W. (2007), {\it Stochastic Simulation. Algorithms and Analysis},
Springer.

\noindent
Astfalck, L., Cripps, E., Gosling, J., Hodkiewicz, M., Milne, I. (2018), 
``Expert elicitation of directional metocean parameters'', {\it Ocean Engineering}, 161, 
268-276.

\noindent
Bloomfield, P. (1973),
``An exponential model for the spectrum of a scalar time series'',
{\it Biometrika}, 60, 217-226.  

\noindent
Bogert, B. P., Healy, M. J. R., Tukey, J. W., Rosenblatt, M. (1963),
``The quefrency analysis of time series for echoes: cepstrum, pseudoauto-covariance, cross-cepstrum and saphe cracking'',
{\it Proceedings of the Symposium on Time Series Analysis},
editor Rosenblatt, Wiley and Sons, 209-243.  

\noindent 
Brillinger, D. R. (1993), ``The digital rainbow: some history and applications of numerical
spectrum analysis'', {\it Canadian Journal of Statistics}, 21, 1-19.

\noindent
Brockwell, P. J., Davis, R. A. (1991), {\it Time Series: Theory and Methods}, second edition, Springer.

\noindent 
Burg, J. P. (1967), ``Maximum entropy spectral analysis'', 
unpublished presentation, 37-th meeting of 
the Society of Exploration Geophysicists, Oklahoma City, Oklahoma.

\noindent
Burg, J. P. (1978), ``Maximum entropy spectral analysis'', {\it Modern Spectrum Analysis}, editor Childers.
Wiley and Sons, 34-41.

\noindent 
Chatfield, C. (2013), {\it The Analysis of Time Series. Theory and Practice}, Springer.

\noindent
Christmas, J. (2014), ``Bayesian Spectral Analysis With Student-$t$ Noise'',
{\it IEEE Transactions on Signal 
Processing}, 62, 2871-2878. 

\noindent
Cram\'er, H. (1942), ``On harmonic analysis in certain functional spaces'',
{\it Arkiv f\"or Matematik, Astronomi och Fysik}, 28B, 12.

\noindent
Cram\'er, H., Leadbetter, M. R. (1967), {\it Stationary and Related Stochastic Processes}, Wiley, New York.

\noindent 
Fisher, R. A. (1929), ``Tests of significance in harmonic analysis'',
{\it Proceedings of the Royal Society of London}, Series A, 125, 796, 54-59.

\noindent 
Gatto, R. (2008), ``Some computational aspects of the generalized von Mises distribution'', 
{\it Statistics and Computing}, 18, 321-331. 

\noindent
Gatto, R. (2009), ``Information theoretic results for circular distributions'', {\it Statistics}, 
43, 409-421.

\noindent
Gatto, R., Jammalamadaka, S. R. (2007),
``The generalized von Mises distribution'',
{\em Statistical Methodology}, 4, 341-353.

\noindent
Gatto, R., Jammalamadaka, S. R. (2015), ``Directional statistics: introduction'',
{\it StatsRef: Statistics Reference Online}, editors Balakrishnan et al., Wiley and Sons, 1-8.

\noindent
Gonella, J. (1972), ``A rotary-component method for analysing meteorological and oceanographic 
vector time series'', {\it Deep-Sea Research}, 19, 833-846.

\noindent
Huang, D. (1990), ``On the maximal entropy property for ARMA processes and ARMA approximation'',
{\it Advances in Applied Probability}, 612-626. 

\noindent 
Jammalamadaka, S. R., SenGupta, A.
(2001), {\em Topics in Circular Statistics}, World Scientific Press. 

\noindent
Kay, S. M., Marple, S. L. (1981), ``Spectrum analysis: a modern perspective'',
{\it Proceedings of the IEEE}, 69, 1380-1419. 

\noindent
Kim, S., SenGupta, A. (2013), ``A three-parameter generalized von Mises distribution'',
{\it Statistical Papers}, 54, 685-693. 

\noindent
Kullback, S. (1954), ``Certain inequalities in information theory and the Cram\'er-Rao
inequality'', {\em Annals of Mathematical Statistics}, 25, 745-751.

\noindent
Kullback, S., Leibler, R. A. (1951), ``On information and sufficiency'',
{\em Annals of Mathematical Statistics}, 22, 79-86.

\noindent
Laeri, F. (1990), ``Some comments on maximum entropy spectral analysis of time series'',
{\it Computers in Physics}, 4, 627-636.  

\noindent
Lin, Y., Dong, S. (2019), ``Wave energy assessment based on trivariate 
distribution of significant wave height, mean period and direction'', {\it 
Applied Ocean Research}, 87, 
47-63. 

\noindent
Lindgren, G. (2012),
{\it Stationary Stochastic Processes: Theory and Applications},
Chapman and Hall/CRC.

\noindent
Mardia, K. V., Jupp, P. E. (2000), {\em Directional Statistics}, Wiley.

\noindent 
Pinkus, A., Zafrany, S. (1997), {\em Fourier Series and Integral Transforms},
Cambridge University Press.

\noindent 
Rice, S. O. (1944), ``Mathematical analysis of random noise'', {\it Bell Systems technical Journal},
23, 282-332.
\\
Also in Rice (1954),
{\it Selected Papers on Noise and Stochastic Processes},
editor Wax, Dover Publications, p. 133-295.

\noindent
Rice, S. O. (1945), ``Mathematical analysis of random noise'', {\it Bell Systems technical Journal}, 24, 46-156.
\\
Also in Rice (1954), ``Mathematical analysis of random noise'',
{\it Selected Papers on Noise and Stochastic Processes},
editor Wax, Dover Publications, p. 133-295.

\noindent
Rowe, D. B. (2005), ``Modeling both the magnitude and phase of complex-valued fMRI data'',
{\it NeuroImage}, 25, 1310–1324.

\noindent Salvador, S., Gatto, R. (2020a),
``Bayesian tests of symmetry for the generalized von Mises distribution'',
preprint, Institute of mathematical statistics and actuarial science, University of Bern.

\noindent Salvador, S., Gatto, R. (2020b),
``A Bayesian test of bimodality for the generalized von Mises distribution'',
preprint, Institute of mathematical statistics and actuarial science, University of Bern.

\noindent
Shannon, C. E. (1948),
``A mathematical theory of communication'', {\em
Bell System Technical Journal}, 27,
379-423, 623-656.

\noindent
Taniguchi, M., Kato, S., Ogata, H., Pewsey, A. (2020),
``Models for circular data from time series spectra'', 
{\it Journal of Time Series Analysis}, 41, 808-829.   

\noindent
Yaglom, A. M. (1962),
{\it An Introduction to the Theory of Stationary Random Functions},
Prentice-Hall.

\noindent
Zhang, L., Li, Q., Guo, Y., Yang, Z., Zhang, L. (2018),  
``An investigation of wind direction and speed in a featured wind farm using joint 
probability distribution methods'', {\it Sustainability}, 10.

\end{document}